\newtheorem{thm}{Theorem}[section]
\newtheorem{lem}[thm]{Lemma}
\newtheorem{prop}[thm]{Proposition}
\newtheorem{cor}[thm]{Corollary}
\newtheorem{open}[thm]{Open Problem}
\theoremstyle{definition}
\newtheorem{defn}[thm]{Definition}
\newtheorem{claim}[thm]{Claim}
\newtheorem{example}[thm]{Example}
\newtheorem*{cor:InductiveConstruction_TreesAndGraph}{Corollary \ref{cor:InductiveConstruction_TreesAndGraph}}
\newtheorem*{cor:RealisationIn2Dimensions}{Corollary \ref{cor:RealisationIn2Dimensions}}
\theoremstyle{remark}
\newtheorem{case}{Case}[thm] 
\tikzset{vertex/.style={circle,fill=black,minimum size=0.4pt,inner sep=1.8pt}}
\tikzset{emptyvertex/.style={shape=circle, inner sep = 1.8pt, minimum size=0.4pt, draw}}
\tikzset{extended line/.style={shorten >=-#1,shorten <=-#1},
 extended line/.default=8cm}
\newcommand{\eq}{=} 
\newcommand{\bR}{{\mathbb{R}}}
\newcommand{\bZ}{{\mathbb{Z}}}
\def\C{{\mathcal{C}}}
 \def\G{{\mathcal{G}}}
 \newcommand{\K}{{\mathcal{K}}}
 \newcommand{\Aut}{\text{Aut}}
 \newcommand{\GL}{\text{GL}}
\begin{document}
\title{Constructing isostatic frameworks for the $\ell^\infty$ plane}


\author{K.~Clinch}%
\address{School of Mathematical Sciences, Queen Mary University of London,  Mile End Road, London, E1 4NS, UK} 
\email{k.clinch@qmul.ac.uk}%
\thanks{The first named author was supported by an Engineering and Physical Sciences Research Council studentship.}

\author{D.~Kitson}%
\address{Department of Mathematics and Statistics, Lancaster University, Lancaster, LA1 4YF, UK}
\email{d.kitson@lancaster.ac.uk}
\thanks{The second named author was supported by the Engineering and Physical Sciences Research Council [grant number EP/P01108X/1].}

\thanks{2010 {\it  Mathematics Subject Classification.}
  52C25, 05C75, 52A21}

\begin{abstract}
We use a new {coloured} multi-graph constructive method to prove that every 2-tree decomposition can be realised in the plane as a bar-joint framework which is minimally rigid (isostatic) with respect to $\ell^1$ or $\ell^\infty$ distance constraints. We show how to adapt this technique to incorporate symmetry and indicate several related open problems on rigidity, redundant rigidity and forced symmetric rigidity in normed spaces.    
\end{abstract}

\maketitle
\section{Introduction}
A simple graph $G=(V,E)$ with vertices embedded generically in $\bR^2$ inherits a natural edge-labelling $\kappa:E\to\{1,2\}$ whereby an edge  (represented by a straight line segment between its embedded vertices) is labelled either $1$ or $2$ depending on whether the slope $m$ of its affine span satisfies $|m|<1$ or $|m|>1$. Simple examples show that not all edge-labellings $\kappa:E\to\{1,2\}$ are realisable in this way. 
Motivated by problems in  graph rigidity under $\ell^p$ distance constraints (see \cite{KP_NonEuclideanNorms,KS_IncidentalSymm,KS_Reflection}), we are interested in the realisability of $d$-tree decompositions in $\bR^d$ (see Section \ref{Preliminaries} for the corresponding definition when $d>2$). 
A {\em $d$-tree decomposition} arises from an edge-labelling $\kappa:E\to\{1,2,\ldots,d\}$ when the edge sets $\kappa^{-1}(1),\ldots,\kappa^{-1}(d)$ are spanning trees in $G$.
In general, multi-graphs which admit a $d$-tree decomposition 
are characterised by the conditions $|E|=d(|V|-1)$ and $|E(H)|\leq d(|V(H)|-1)$ for each subgraph $H$ (see Nash-Williams \cite{N-W_SpanningTrees} and Tutte \cite{T_SpanningTrees}) and such graphs are said to be {\em $(d,d)$-tight}.
Constructive characterisations for  $(d,d)$-tight graphs and connections to graph rigidity under $\ell^2$ distance constraints are discussed in Tay \cite{Tay_Multigraphs1}, Frank and Szeg\"{o} \cite{FS_PackingAndCoveringTrees} and in Graver, Servatius and Servatius \cite[\S4.9]{GSS_CombinatorialRigidity}, for example. 

It is shown in \cite{KP_NonEuclideanNorms} that, for $p\not=2$, the structure graph of an isostatic bar-joint framework (see Section \ref{Preliminaries}) in the $\ell^p$ plane is necessarily $(d,d)$-tight. However, the validity of the converse statement is an open problem for $d\geq 3$ and determining the realisability or otherwise of $d$-tree decompositions in $\bR^d$ would settle this conjecture for the $\ell^\infty$ context. 
In this article we present a constructive method for realising {coloured} 2-tree decompositions in the plane. The method consists of two parts: a multi-graph construction scheme for $(d,d)$-tight graphs which tracks the evolution of $d$ edge-disjoint spanning trees, and in the case $d=2$, a method of constructing geometric placements for these multi-graphs which accommodates parallel edges. While it is known that $(d,d)$-tight graphs are constructible in terms of multi-graphs, the particular role of spanning trees in these constructions is given prominence here. Moreover, the method of assigning geometric placements to multi-graphs used here is a new technique which can be adapted for other contexts. The graph construction is presented in Section \ref{sec:AllD_Combinatorics} and the geometric realisations are contained in Section \ref{sec:2D_Geometry}. In Section 
\ref{sec:Symmetry}, we illustrate the versatility of the technique by adapting it to symmetric $2$-tree decompositions. In the concluding section we state some related open problems and indicate connections to other areas of graph rigidity for normed spaces. 

This work was initiated at the workshop ``Advances in Combinatorial and Geometric Rigidity" which took place at the Banff International Research Station from 12th-17th July 2015.

\section{Preliminaries}\label{Preliminaries} 
In this section we introduce terminology, state the main results and provide some background.

\subsection{Graph Theory}

Let $G=(V,E)$ be a finite, loop-free multi-graph. Let $X\subseteq	 V$ be a set of vertices. The \emph{neighbourhood} of $X$ in $G$, $N_G(X)$, is the set of all vertices in $V-X$ which share an edge with some $x\in X$. When $X=\{x\}$ we refer to $N_G(x)$ instead of $N_G(\{x\})$. The subgraph of $G$ \emph{induced} by $X$ is denoted $G[X]$ and has vertex set $X$ and edge set $E_G(X) = \{uv\in E: u,v\in X\}$. We let $i_G(X)= |E_G(X)|$. When the original graph $G$ is apparent from the context, we omit the subscripts and refer simply to $N(X)$, $E(X)$ and $i(X)$.
For $F\subseteq E$, the subgraph of $G$ \emph{induced} by $F$ is denoted $G[F]$ and has vertex set $V_G(F) = \{v\in V: uv\in F \text{ for some } u\in V\}$, and edge set $F$. 
We say that the edge set $F$ \emph{spans} $G$ when $V_G(F)=V$. Once more, when the graph is apparent, we omit the subscripts.
For a vertex $v\in V$, the \emph{degree} of $v$ is the number of edges incident {to} $v$, and is denoted by $d_G(v)$ or $d(v)$. Note that for multi-graphs $d_G(v) \geq |N_G(v)|$. The \emph{minimum degree of $G$} is the minimum of $d(v)$ for all $v\in V$ and is denoted $\delta(G)$.

A {\em $d$-tree decomposition} is a tuple $\G=(G;T_1,\ldots,T_d)$ where $G$ is a multi-graph which is the edge-disjoint union of spanning trees $T_1,\ldots,T_d$. {Note that since $G$ is a multi-graph, it may have multiple edges between a given pair of vertices $u,v\in V(G)$. As such, we say two spanning trees $T_i$ and $T_j$ of $G$ are \emph{edge-disjoint} if whenever they both contain a $uv$-edge, these edges are distinct in $G$.} 
Formally, we regard the tuple $\K_1 = (K_1;T_1,\ldots,T_d)$, where $K_1$ is the graph with a single vertex and no edge{, and the edge sets of} $T_1,\ldots,T_d$ are empty, as a $d$-tree decomposition.
We denote by $\mathcal{G}_d$ the set of all $d$-tree decompositions.
Note that if a multi-graph $G$ admits a $d$-tree decomposition then it is necessarily loop-free and contains at most $d$ copies of any edge. 

\subsection{Realisations}
\label{Pre:Realisations}
Let $G=(V,E)$ be a finite  simple graph. A {\em placement} of   $G$ in $\mathbb{R}^d$ is an injective map $p:V\to \mathbb{R}^d$. The pair $(G,p)$ is referred to as a {\em (bar-joint) framework} in $\mathbb{R}^d$.
For each edge $uv\in E$, the pair $\{p(u),p(v)\}$ is said to be {\em well-positioned} if there exists a unique $k\in\{1,\ldots,d\}$ such that 
\[\| p(u)-p(v) \|_{\infty} = |(p(u)-p(v))\cdot e_k|.\]
Here $(e_i)_{i=1}^d$ is the usual basis for $\mathbb{R}^d$ and $\|x\|_\infty = \max_{1\leq i\leq d}|x\cdot e_i|$ is the $\ell^\infty$ norm of a vector $x\in \mathbb{R}^d$. 
The framework $(G,p)$ is said to be {\em well-positioned} if $\{p(u),p(v)\}$ is well-positioned for every edge $uv\in E$.

Each well-positioned framework $(G,p)$ in $\mathbb{R}^d$ inherits an edge-labelling $\kappa_p:E\to \{1,\ldots,d\}$, referred to as the {\em framework colouring}, where for each edge $uv\in E$,
\[\kappa_p(uv)=\{k\} \,\mbox{ if and only if } \,\,\|p(u)-p(v)\|_\infty = |(p(u)-p(v))\cdot e_k|.\]
The set of edges of colour $k$, $\kappa_p^{-1}(k)$, induces the  subgraph $G[\kappa_p^{-1}(k)]$ which is referred to as an induced {\em monochrome subgraph} of $G$. 
A {\em realisation} for a $d$-tree decomposition $\G=(G;T_1,\ldots,T_d)$, where $G$ is a simple graph, is a framework $(G,p)$ in $\mathbb{R}^d$ with the property that $T_1,\ldots,T_d$ are the induced monochrome subgraphs of $G$.

\begin{example}
Figure \ref{fig:W5Placements} illustrates three placements of the wheel graph $W_5$ in the plane together with the induced framework colourings. Note that $W_5$ admits exactly two distinct 2-tree decompositions (up to graph isomorphism) and these are realised by the placements in the left and centre of Figure \ref{fig:W5Placements}. These two placements are well-positioned as each edge affinely spans a line of slope $m$ with $|m|\not=1$. The rightmost placement is not well-positioned as the edges incident {to} $v_0$ affinely span lines of slope $\pm1$.
\end{example}

\begin{figure}
\centering
\begin{tikzpicture}[scale = 0.9]
	\begin{scope}[shift = {(-2,0)}]
	  \node[vertex, label = below left :$v_1$]  (v1) at (-1  ,-1) {};
	  \node[vertex, label = below right :$v_2$] (v2) at (1  ,-1) {};
	  
		 \node[vertex, label = left :$v_3$] 	(v3) at (-1  ,1.2) {};
	  \node[vertex, label =  right :$v_4$] 	(v4) at (1  ,0.6) {};
		
		 \node[vertex, label = below  :$v_0$] 	(v0) at (0.1, -0.1) {};
		
		\draw (v1) to (v2); \draw (v3) to (v4);
		\draw (v1) to (v0); \draw (v0) to (v4);
	  \draw[dashed] (v1) to (v3); \draw[dashed] (v2) to (v4);
		\draw[dashed] (v2) to (v0); \draw[dashed] (v0) to (v3);
		  
	\end{scope}
	\begin{scope}[shift = {(2,0)}]
	 \node[vertex, label = below left :$v_1$]  (v1) at (-1  ,-1) {};
	  \node[vertex, label = below right :$v_2$] (v2) at (0.2  ,-1) {};
	  
		 \node[vertex, label =  left :$v_3$] 	(v3) at (-1  ,1) {};
	  \node[vertex, label = below  :$v_4$] 	(v4) at (-0.7  ,0.2) {};
		
		 \node[vertex, label = right  :$v_0$] 	(v0) at (1, 0.5) {};
		
		\draw[dashed] (v0) to (v2); \draw[dashed] (v3) to (v4);
		\draw (v1) to (v0); \draw (v0) to (v4);
	  \draw[dashed] (v1) to (v3); \draw[dashed] (v2) to (v4);
		\draw (v2) to (v1); \draw (v0) to (v3);
	\end{scope}	
	\begin{scope}[shift = {(6,0)}]
	  \node[vertex, label = below left :$v_1$]  (v1) at (-1  ,-1) {};
	  \node[vertex, label = below right :$v_2$] (v2) at (1  ,-1) {};
	  
		 \node[vertex, label =  left :$v_3$] 	(v3) at (-1  ,1) {};
	  \node[vertex, label = right :$v_4$] 	(v4) at (1  ,1) {};
		
		 \node[vertex, label = below  :$v_0$] 	(v0) at (0, 0) {};
		
		\draw (v1) to (v2); \draw (v3) to (v4);
		
		\draw[lightgray] (v1) to (v0); \draw[lightgray] (v0) to (v4);
	  \draw[dashed] (v1) to (v3); \draw[dashed] (v2) to (v4);
		\draw[lightgray] (v2) to (v0); \draw[lightgray] (v0) to (v3);
		\end{scope}	
\end{tikzpicture}
\caption{Left and centre: Realisations of the two distinct $2$-tree decompositions for the wheel graph $W_5$. Right: A placement of $W_5$ which is not well-positioned.}
\label{fig:W5Placements}
\end{figure}
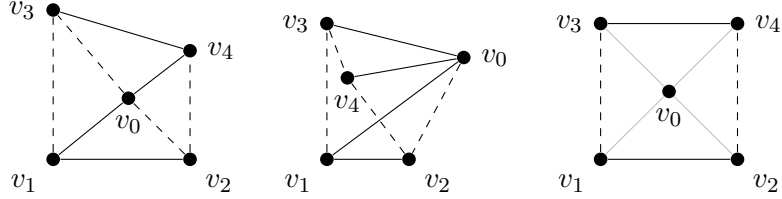

\subsection{Graph rigidity}
Motivation for considering realisation problems of this type comes from graph rigidity in normed spaces. Consider again a simple graph $G$. A framework $(G,p)$ is {\emph{(locally) rigid}} 
 with respect to a given norm on $\mathbb{R}^d$ if every edge-length preserving continuous motion of the vertices is obtained from an isometric motion of the space.
The notion of a well-positioned framework $(G,p)$, introduced above in the context of the $\ell^\infty$ norm, generalises to the condition that the {\emph{rigidity map}}, 
\[
f_G:(\bR^{d})^{V}\to \bR^{E},\,\,\,\,\,\,\, (x(v))_{v\in V}\mapsto (\|x(v)-x(w)\|)_{vw\in E},
\] 
is differentiable at $p=(p(v))_{v\in V}$. In the case of the $\ell^\infty$ norm on $\bR^d$, {\em infinitesimal rigidity} is the property that elements which lie in the kernel of the differential $df_G(p)$, also called the {\em infinitesimal flexes} of the framework, are translational, i.e.~ take the form $(a,\ldots,a)\in (\bR^{d})^{V}$ where $a\in \bR^d$.
It can be shown that, with respect to the $\ell^\infty$ norm,  local rigidity and infinitesimal rigidity are equivalent for well-positioned frameworks (see \cite{K_PolyhedralNorms}). In this case we simply refer to a framework as being {\em rigid}. A framework $(G,p)$ is {\em minimally rigid}  (or \emph{isostatic}) if it is rigid and removing any edge from $G$ results in a framework which is not rigid. 

Consider an $\ell^q$ norm on $\bR^d$ with $1\leq q\leq \infty$ and $q\not=2$. It is shown in \cite{KP_NonEuclideanNorms} that  a necessary condition for a well-positioned framework $(G,p)$ {on a simple graph $G$} to be isostatic with respect to $\ell^q$ is that $G$ is $(d,d)$-tight. It is also shown that in the case $d=2$, a converse statement holds:  every $(2,2)$-tight {simple} graph $G$ admits a well-positioned framework $(G,p)$ in the plane which is isostatic for the $\ell^q$ norm. It is conjectured that this converse statement extends to $d\geq 3$.
In the case of the $\ell^\infty$ norm, the spanning tree characterisation of $(d,d)$-tight graphs obtained by Nash-Williams \cite{N-W_SpanningTrees} and Tutte \cite{T_SpanningTrees}, together with the following result, support this conjecture and provide the link to the realisation problems considered in this article. 

\begin{thm}{\cite[Propositions 4.3 \& 4.4]{KP_NonEuclideanNorms}}
\label{thm:K_MinInfRigid_Simple}
Let $G$ be a simple graph and let $(G,p)$ be a well-positioned framework in $(\mathbb{R}^d, \|\cdot\|_\infty)$. The following statements are equivalent:
\begin{enumerate}
	\item $(G,p)$ is minimally  rigid;
	\item the monochrome subgraphs   induced by the framework colouring $\kappa_p$ are spanning trees in $G$.
	
\end{enumerate}
\end{thm}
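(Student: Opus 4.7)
The plan is to analyse $df_G(p)$ directly, exploiting the fact that at a well-positioned configuration the $\ell^\infty$ edge-length function has an especially nice derivative. If $uv\in E$ has $\kappa_p(uv)=k$, then in a neighbourhood of $p$ the function $\|x(u)-x(v)\|_\infty$ agrees with the smooth map $x\mapsto \sigma_{uv}((x(u)-x(v))\cdot e_k)$, where $\sigma_{uv}\in\{\pm 1\}$ records the sign of $(p(u)-p(v))\cdot e_k$. Thus in the row of $df_G(p)$ indexed by $uv$ the only nonzero entries occur in the columns indexed by the $k$-th coordinate of $u$ and of $v$, with values $\pm\sigma_{uv}$. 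After permuting rows by colour and columns by coordinate, $df_G(p)$ becomes block-diagonal with blocks $M_1,\dots,M_d$, where $M_k$ is, up to row-signs, the signed vertex-edge incidence matrix of the spanning subgraph $G_k:=(V,\kappa_p^{-1}(k))$.

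I would then invoke the standard fact that the signed incidence matrix of a graph on $n$ vertices with $c$ connected components (isolated vertices counted) has rank $n-c$. Hence
\[
\operatorname{rank} df_G(p)=\sum_{k=1}^d (|V|-c_k),
\]
where $c_k$ is the number of connected components of $G_k$. Moreover, since constant vector-fields lie in $\ker df_G(p)$ and yield a $d$-dimensional subspace of translational flexes, infinitesimal rigidity under $\ell^\infty$ is equivalent to $\ker df_G(p)$ being exactly this $d$-dimensional space, i.e.\ $\operatorname{rank} df_G(p)=d(|V|-1)$. Combining these gives $\sum_k c_k=d$, and since $c_k\geq 1$ each $c_k=1$, so every monochrome subgraph $G_k$ is spanning and connected.

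For minimal rigidity one additionally needs no row-dependencies, i.e.\ $|E|=\operatorname{rank} df_G(p)=d(|V|-1)$. Writing $|E|=\sum_k |E(G_k)|$ and using $|E(G_k)|\geq |V|-1$ for each spanning connected $G_k$, with equality exactly when $G_k$ is a tree, this forces each $G_k$ to be a spanning tree. Conversely, if each $G_k$ is a spanning tree then each block $M_k$ has rank $|V|-1$ with linearly independent rows, so $df_G(p)$ has full row rank $d(|V|-1)=|E|$, giving both rigidity and minimality. The equivalence of infinitesimal and local rigidity for well-positioned $\ell^\infty$-frameworks is standard (e.g.\ \cite{K_PolyhedralNorms}) and completes the chain to minimal rigidity as stated.

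The main obstacle is a careful verification of the block decomposition: one must confirm that the unique-maximum condition in the definition of well-positioned really does ensure differentiability of $\|\cdot\|_\infty$ at each $p(u)-p(v)$, and that the resulting gradient lies along a single coordinate axis. Once this is in hand, the argument is essentially the classical incidence-matrix rank computation together with elementary tree/edge counting.
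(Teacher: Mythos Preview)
The paper does not prove this theorem; it is quoted from \cite{KP_NonEuclideanNorms} (Propositions~4.3 and~4.4) and used as a black box. Your argument is correct and is essentially the approach taken in that reference: the key observation is that at a well-positioned placement the $\ell^\infty$ rigidity matrix block-diagonalises, after reordering rows by colour and columns by coordinate index, into the signed incidence matrices of the monochrome subgraphs $G_k=(V,\kappa_p^{-1}(k))$; the equivalence then reduces to the classical rank formula $\operatorname{rank}(M_k)=|V|-c_k$ together with an edge count. The only point worth tightening is your appeal to the equivalence of local and infinitesimal rigidity: in the paper's statement ``minimally rigid'' refers to infinitesimal rigidity directly (via the kernel of $df_G(p)$), so the detour through \cite{K_PolyhedralNorms} is not strictly needed for the equivalence as stated.
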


To summarise, if $G$ is a simple graph which is $(d,d)$-tight then it admits a $d$-tree decomposition and so, by the above theorem, to show that $G$ admits a well-positioned minimally rigid framework in $(\mathbb{R}^d, \|\cdot\|_\infty)$ it is sufficient to prove that this $d$-tree decomposition can be realised in $\bR^d$.

\subsection{Inductive constructions}
Nash-Williams \cite{N-W_SpanningTrees} and Tutte \cite{T_SpanningTrees} independently  
characterised the multi-graphs which admit a $d$-tree decomposition as those which are $(d,d)$-tight:

\begin{thm}\label{thm:TNW_SpanningTreeSparsity}
A  multi-graph $G=(V,E)$ is expressible as an edge-disjoint union of $d$ spanning trees if and only if 
\begin{enumerate}
	\item $|E|=d|V|-d$, and \label{part:TNW_SpanningTreeSparsity_tight}
	\item $i(X)\leq d|X|-d$ for all $\emptyset\neq X \subseteq V$.\label{part:TNW_SpanningTreeSparsity_sparse}
\end{enumerate}
\end{thm}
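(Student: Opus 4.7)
The plan is to prove necessity by direct counting, and to prove sufficiency by applying Edmonds' matroid union theorem to the cycle matroid of $G$.

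For necessity, suppose $E$ is the edge-disjoint union of spanning trees $T_1,\ldots,T_d$. Each $T_i$ contributes exactly $|V|-1$ edges, yielding $|E|=d(|V|-1)$, which is (a). For any nonempty $X\subseteq V$, the restriction $T_i[X]$ is a subgraph of a tree and therefore a forest, contributing at most $|X|-1$ edges to $G[X]$. Summing over $i=1,\ldots,d$ gives $i(X)\leq d(|X|-1)$, which is (b).

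For sufficiency, let $M_G$ denote the cycle matroid on the ground set $E$, whose independent sets are the forests of $G$ and whose rank function is $r_{M_G}(F)=|V|-c(V,F)$, where $c(V,F)$ counts the connected components of the spanning subgraph $(V,F)$. By Edmonds' matroid union theorem, the edge set $E$ partitions into $d$ edge-disjoint forests if and only if
\[
|F|\leq d\cdot r_{M_G}(F)\quad\text{for every }F\subseteq E.
\]
I would then verify that this condition is equivalent to the sparsity condition (b). For the forward implication, taking $F=E(X)$ for nonempty $X\subseteq V$ gives $r_{M_G}(E(X))\leq |X|-1$, so $i(X)\leq d(|X|-1)$ as required. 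For the reverse implication, given an arbitrary $F\subseteq E$, let $X_1,\ldots,X_k$ be the vertex sets of the non-trivial connected components of $(V,F)$; then $|F|=\sum_j |F\cap E(X_j)|$ and $r_{M_G}(F)=\sum_j(|X_j|-1)$, so applying (b) to each $X_j$ delivers the matroid inequality term by term. Finally, because each forest in such a decomposition has at most $|V|-1$ edges and the total is $|E|=d(|V|-1)$ by (a), every forest must in fact be a spanning tree.

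The main obstacle is the careful translation between the edge-subset matroid condition produced by Edmonds' theorem and the vertex-subset sparsity condition (b); one must correctly handle subsets $F$ whose associated subgraph $(V,F)$ has multiple non-trivial components, as well as the contribution of isolated vertices to $c(V,F)$. An alternative self-contained route avoiding the explicit invocation of matroid union is to argue inductively via an augmenting-path exchange argument applied to a maximal collection of $d$ edge-disjoint forests: if some forest fails to span, one uses the $(d,d)$-tightness hypothesis to locate an edge whose addition permits a cycle-breaking swap, thereby contradicting maximality. This is essentially a direct combinatorial translation of the matroid union proof.
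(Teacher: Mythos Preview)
Your argument is correct and is in fact the standard modern route to the Nash--Williams/Tutte theorem. The necessity direction is immediate, and for sufficiency your reduction to Edmonds' matroid partition theorem is sound: the key step, namely passing from the vertex-subset sparsity condition (b) to the edge-subset condition $|F|\le d\cdot r_{M_G}(F)$ by decomposing $(V,F)$ into its non-trivial components, is carried out correctly, and the counting of isolated vertices in $c(V,F)$ causes no trouble since they contribute equally to $|V|$ and to $c(V,F)$.

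However, there is no comparison to be made with the paper's own proof, because the paper does not prove this theorem. Theorem~\ref{thm:TNW_SpanningTreeSparsity} is stated as a classical result and attributed to Nash-Williams and Tutte via citation; the paper simply quotes it as background. The paper does remark, just after Theorem~\ref{thm:Tay_ddtightInductiveConstruction}, that Tay later gave an alternative proof of Theorem~\ref{thm:TNW_SpanningTreeSparsity} using the inductive $(d,d)$-tight construction, which is a rather different route from your matroid-union argument (and closer in spirit to the augmenting-path alternative you sketch at the end). But neither proof is reproduced in the present paper.
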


During his work on the rigidity of body-bar frameworks, Tay \cite{Tay_Multigraphs1} found the following inductive construction of $(d,d)$-tight graphs: 

\begin{thm}\label{thm:Tay_ddtightInductiveConstruction}
A multi-graph $G=(V,E)$ is $(d,d)$-tight if and only if there exists a sequence of graphs
\[
K_1=G^{(1)} \rightarrow G^{(2)} \rightarrow \cdots \rightarrow G^{(n)}=G
\]
such that for all $2\leq i\leq n$, $G^{(i)}$ is obtained from $G^{(i-1)}$ by a $d$-dimensional $j$-extension, for some $0\leq j \leq d-1$. 
\end{thm}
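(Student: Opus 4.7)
The plan is to treat the two implications separately; sufficiency is a routine calculation, while necessity requires a careful reduction argument.

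\textbf{Sufficiency ($\Leftarrow$).} Recall that a $d$-dimensional $j$-extension adds a new vertex $w$ of degree $d+j$ and deletes $j$ edges whose endpoints lie in $N(w)$. I would show by induction that starting from $K_1$, each such operation preserves the two conditions of Theorem~\ref{thm:TNW_SpanningTreeSparsity}. The edge/vertex count is immediate: each operation adds $1$ vertex and $(d+j)-j = d$ edges, matching the required change in $d|V|-d$. For sparsity, if $X\subseteq V(G^{(i)})$ is a non-empty subset and $Y=X\setminus\{w\}$, then one checks
\[
i_{G^{(i)}}(X) = i_{G^{(i-1)}}(Y) - r + s,
\]
where $r$ is the number of removed edges with both endpoints in $Y$ and $s = |\{vw : v\in Y\}|$ counted with multiplicity. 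The key point is the inequality $s - r \leq d$ whenever $X\ni w$, which follows from the fact that every removed edge had both endpoints in $N(w)$, combined with $|N(w)| \leq d+j$ and the degree of $w$ being $d+j$. For $X\not\ni w$ the sparsity condition can only improve (edges are removed, none added).

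\textbf{Necessity ($\Rightarrow$).} I would argue by induction on $|V|$, the base case $|V|=1$ being $K_1$. Assume $G$ is $(d,d)$-tight with $|V|\geq 2$. Summing degrees gives $\sum_{v\in V} d_G(v) = 2d|V|-2d < 2d|V|$, so there exists a vertex $w$ with $d_G(w) \leq 2d-1$. Because every vertex in a $(d,d)$-tight graph has degree at least $d$ (consider $X = V\setminus\{w\}$ together with the count), we may write $d_G(w) = d+j$ for some $0\leq j\leq d-1$. The goal is to show that $w$ admits an \emph{inverse} $j$-extension: we delete $w$ and select $j$ new edges with endpoints in $N_G(w)$ (allowing multi-edges, up to the multi-graph bound of $d$) so that the resulting graph $G'$ is $(d,d)$-tight; induction applied to $G'$ then completes the sequence.

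\textbf{Where the real work lies.} The main obstacle is the existence of an admissible set of $j$ new edges. The count on $G'$ is automatic, but the sparsity condition requires a choice argument. I would analyse the family
\[
\mathcal{F} = \Set{4cm}{X\subseteq V\setminus\{w\}}{$X\neq\emptyset$ and $i_G(X)=d|X|-d$}
\]
of tight sets avoiding $w$. A standard submodularity argument on the function $X\mapsto d|X|-d - i_G(X)$ shows that $\mathcal{F}$ is closed under intersection and union, so its maximal elements are pairwise disjoint. A candidate new edge $e$ on $N_G(w)$ is blocked only by a member of $\mathcal{F}$ that already contains both endpoints of $e$. Using the degree condition $d_G(w)=d+j$ and the tightness of $G$, I would show a counting inequality establishing that the blocked edges cannot exhaust all $j$ slots; concretely, one shows that at each stage the edges one may still add span a structure with enough freedom to absorb one more, iterating $j$ times. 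This is the technical heart of Tay's argument and follows the matroid-partition / graph-orientation style reasoning familiar from \cite{FS_PackingAndCoveringTrees}; the remaining verification that the chosen $G'$ is $(d,d)$-tight is then immediate from the construction.
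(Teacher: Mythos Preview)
The paper does not prove this theorem; it is quoted from Tay \cite{Tay_Multigraphs1} as background, so there is no in-paper argument to compare against and I will assess your attempt on its own.

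Your sufficiency argument contains a real error: the inequality $s-r\le d$ fails once $d\ge 3$ and $j\ge 2$. Take $d=3$ and let $G^{(i-1)}$ have vertex set $\{u,a,b\}$ and edge multiset $\{ab,ab,ab,\,ua,ua,\,ub\}$; this is $(3,3)$-tight and is reachable from $K_1$ by two $0$-extensions. Now perform a $3$-dimensional $2$-extension removing one copy of $ua$ and the edge $ub$, and attaching $w$ by the five edges $wu,\,wa,wa,\,wb,wb$; this satisfies $N_{G^{(i)}}(w)\supseteq\{u,a,b\}$, so it is a legal move under the paper's definition. For $X=\{a,b,w\}$ you get $s=4$, $r=0$, and $i_{G^{(i)}}(X)=7>6=3|X|-3$. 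Thus a $j$-extension of a $(d,d)$-tight multigraph need not be $(d,d)$-tight, and the ``if'' direction, read literally, is already false for $d\ge 3$. The statement should be read with the standard convention that each $G^{(i)}$ is itself $(d,d)$-tight, under which sufficiency is a tautology and the entire content is necessity. (For $d=2$, which is all the paper actually uses downstream, your inequality does hold.)

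Your necessity sketch follows the right template --- low-degree vertex, submodularity of $X\mapsto d|X|-i_G(X)$, analysis of tight sets --- but the sentence ``one shows a counting inequality establishing that the blocked edges cannot exhaust all $j$ slots'' is exactly the step that carries the whole argument, and you have not supplied it. For a vertex of degree $2d-1$ one must insert $d-1$ edges among a neighbour set that may be very small while keeping every subset sparse; a naive greedy insertion can get stuck, and the uncrossing of tight sets alone does not finish the job. Tay's original argument and the Frank--Szeg\H{o} treatment you cite resolve this via a genuine matroid-union / orientation argument rather than a one-line count, so as written the proof is incomplete at its crucial point.
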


A \emph{$d$-dimensional $j$-extension} of a graph $G=(V,E)$ forms a new graph $G'$ by first deleting some set of edges $F$ from $G$, with $|F|=j$, and then appending a new vertex $v$ to $G$, incident {to} $d+j$ new edges, such that $N_{G'}(v)\supseteq V_G(F)$. Note that in rigidity theory it is usually sufficient to only consider $j\leq d-1$, however in this article we will allow $j\geq d$ when $d=1$. The inverse of a $d$-dimensional $j$-extension is a \emph{$d$-dimensional $j$-reduction} which, given a graph $G'$, forms $G$ by deleting some vertex $v$ of degree $j+d$ and then adding $j$ edges between the vertices in $N_{G'}(v)$. Extensions and reductions of this type were first introduced by Henneberg \cite{H_HennebergMoves}, and so are also known as Henneberg moves and inverse Henneberg moves respectively. 

Tay later used the inductive construction in Theorem \ref{thm:Tay_ddtightInductiveConstruction} to obtain a new proof of Theorem \ref{thm:TNW_SpanningTreeSparsity}, see \cite{Tay_Multigraphs2}. Combining Theorems \ref{thm:TNW_SpanningTreeSparsity} and \ref{thm:Tay_ddtightInductiveConstruction} gives the following result, which is the starting point of this paper:

\begin{thm}\label{thm:TNWTay_SpanningTreeInductiveConstruction}
A multi-graph $G=(V,E)$ is the edge-disjoint union of $d$ spanning trees if and only if there exists a sequence of graphs
\[
K_1=G^{(1)} \rightarrow G^{(2)} \rightarrow \cdots \rightarrow G^{(n)}=G
\]
such that for all $2\leq i\leq n$, $G^{(i)}$ is obtained from $G^{(i-1)}$ by a $d$-dimensional $j$-extension, for some $0\leq j \leq d-1$. 
\end{thm}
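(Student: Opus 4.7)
The plan is to observe that this result is essentially immediate by chaining together the two preceding theorems. Nash--Williams and Tutte (Theorem \ref{thm:TNW_SpanningTreeSparsity}) identify admitting a decomposition into $d$ edge-disjoint spanning trees with the combinatorial sparsity conditions defining $(d,d)$-tightness, while Tay (Theorem \ref{thm:Tay_ddtightInductiveConstruction}) characterises $(d,d)$-tight multi-graphs via precisely the inductive sequence of $d$-dimensional $j$-extensions appearing in the statement. Since both characterisations involve the same intermediate class of multi-graphs, the two equivalences compose to give the claim.

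Concretely, for the forward direction, assume $G$ is the edge-disjoint union of $d$ spanning trees. Then every spanning tree contributes $|V|-1$ edges, forcing $|E|=d|V|-d$, and restricting a tree decomposition to any induced subgraph $G[X]$ produces a forest on $X$ in each colour, giving $i(X)\leq d|X|-d$. Hence $G$ is $(d,d)$-tight by Theorem \ref{thm:TNW_SpanningTreeSparsity}, and Theorem \ref{thm:Tay_ddtightInductiveConstruction} supplies the required sequence starting from $K_1$. The reverse direction is symmetric: existence of the sequence gives $(d,d)$-tightness via Theorem \ref{thm:Tay_ddtightInductiveConstruction}, and applying the other implication of Theorem \ref{thm:TNW_SpanningTreeSparsity} produces $d$ edge-disjoint spanning trees.

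Because the argument is a formal composition of two known equivalences, there is no genuine technical obstacle; the proof is a one-line corollary. The only mild care required is to verify that both cited theorems are stated in the same category of objects (finite loop-free multi-graphs, possibly with parallel edges bounded by $d$), which follows from the remark after the definition of $\mathcal{G}_d$. It is worth flagging, however, that the statement is weaker than what the paper will actually need later: it merely asserts coexistence of a tree decomposition \emph{and} an extension sequence, without asserting that the spanning trees $T_1,\ldots,T_d$ themselves evolve coherently along the sequence. That strengthening—tracking an explicit $d$-tree decomposition through each $j$-extension—is the genuinely new combinatorial contribution developed in Section \ref{sec:AllD_Combinatorics}, and the present theorem functions as the foundational equivalence on which that refinement will be built.
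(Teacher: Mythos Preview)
Your proposal is correct and matches the paper's approach exactly: the paper states this theorem as an immediate consequence of ``Combining Theorems \ref{thm:TNW_SpanningTreeSparsity} and \ref{thm:Tay_ddtightInductiveConstruction}'' with no further proof. Your closing remark about the distinction between this result and the refined version tracking the trees through the extensions is also spot on, and is precisely the content the paper develops in Section \ref{sec:AllD_Combinatorics}.
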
 

\section{Inductive construction for $d$-tree decompositions}\label{sec:AllD_Combinatorics}
Given a $d$-tree decomposition $\G=(G;T_1,\ldots,T_d)$, the construction in Theorem \ref{thm:TNWTay_SpanningTreeInductiveConstruction} can be adapted so that it not only constructs the multi-graph $G$, but simultaneously constructs each of the {monochrome} spanning trees $T_1, T_2, \ldots, T_d$ through a sequence of $1$-dimensional Henneberg moves. 
This extension is not difficult to see, 
however to the best of our knowledge this result has not appeared in the literature. For completeness, we prove {it} here. 

\begin{lem}\label{lem:HennebergMovesOnTrees}
Let $T=(V,E)$ be a simple  graph, and let $S=(U,F)$ be a subgraph of $T$ which is a tree. Suppose $T'=(V',E')$ is formed from $T$ by a $1$-dimensional $|F|$-extension which deletes the edges in $F$, and appends a vertex $v$ incident {to} $|F|+1$ new edges. Then $T$ is a tree if and only if $T'$ is a tree. 
\end{lem}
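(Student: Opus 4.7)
The plan is to reduce the equivalence to a question of connectedness via edge counting, and then to analyse the auxiliary graph $T'-v = (V, E\setminus F)$. Writing $k = |F|$, we have $|U| = k+1$ because $S$ is a tree, and the extension gives $|V(T')| = |V|+1$ and $|E(T')| = |E|+1$. Hence the identity $|E| = |V|-1$ holds for $T$ if and only if the analogous identity holds for $T'$, and since a connected graph on $n$ vertices with $n-1$ edges is a tree, in each direction it suffices to show that $T'$ is connected if and only if $T$ is. Note also that $v$ has degree $k+1$ in $T'$ with $N_{T'}(v) = U$, because $N_{T'}(v) \supseteq U$ and the two sets have the same cardinality.

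For the forward direction, I assume $T$ is a tree and argue that $T - F = T' - v$ is a forest with exactly $k+1$ components, one containing each vertex of $U$. The count $|E| - k = |V|-1-k$ together with acyclicity yields $k+1$ components. To see that the vertices of $U$ lie in pairwise distinct components I argue by contradiction: two such vertices lying in the same component of $T-F$ would be joined by a path there, and also by a distinct path inside $S$, giving two distinct paths in the tree $T$ between the same endpoints. Reintroducing $v$, which in $T'$ is adjacent to every vertex of $U$, then links all $k+1$ components, so $T'$ is connected.

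For the converse, I assume $T'$ is a tree and invoke the standard fact that deleting a vertex of degree $k+1$ from a tree leaves a forest with exactly $k+1$ components, one per neighbour; applied to $v$ this realises $T-F$ as components $C_1,\ldots,C_{k+1}$ with each $u_i \in U$ in its own $C_i$. Recovering $T$ amounts to adding the edges of $S$ back to this forest. Under the bijection $u_i \mapsto C_i$, the tree $S$ induces a subgraph of the ``component graph'' on $k+1$ super-vertices which is loopless (distinct $u_i$ go to distinct $C_i$) and has no multi-edges (by injectivity on vertices), with $k$ edges; this is a spanning tree of the component graph, so the edges of $S$ merge the components into one and $T$ is connected.

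The main obstacle in both directions is establishing that the vertices of $U$ lie in pairwise distinct components of $T-F$. In the forward direction this is the unique-path property of $T$; in the reverse direction it is the classical statement about vertex deletion in a tree. Everything else reduces to a short edge count combined with the observation that a connected graph with $n-1$ edges on $n$ vertices is a tree.
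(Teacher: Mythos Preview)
Your argument is correct and follows the paper's outline exactly: first the edge-count equivalence $|E|=|V|-1 \Leftrightarrow |E'|=|V'|-1$, then reduction to the equivalence of connectedness. The paper's own proof simply asserts that $T$ is connected if and only if $T'$ is, leaving the verification to the reader; your component analysis of $T'-v = T-F$ fills in that step rigorously. A shorter justification is possible---in one direction replace each edge $u_iu_j\in F$ on a $T$-path by the detour $u_ivu_j$, and in the other replace each transit $u_ivu_j$ through $v$ on a $T'$-path by the unique $S$-path from $u_i$ to $u_j$---but your structural argument is equally valid and makes explicit why the hypothesis that $S$ be a tree (and not merely that $|U|=|F|+1$) is used.
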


\begin{proof}
By the definition of a $d$-dimensional $j$-extension, $|E|=|E'|-1$ and 
$|V|=|V'|-1$. It follows that $|E'| = |V'| -1$ if and only if $|E| = |V| -1$.
So to prove that $T$ is a tree if and only if $T'$ is a tree, it remains to note that $T$ is connected if and only if $T'$ is connected.
\end{proof}

The following elementary fact, which is a consequence of Theorem \ref{thm:TNW_SpanningTreeSparsity} and the definition of $\mathcal{G}_d$, shall also be useful:

\begin{lem}\label{lem:MinDegree}
Let $G$ be a multi-graph, with $|V(G)|\geq 2$, which admits a $d$-tree decomposition for some $d\geq 1$. Then $d\leq\delta(G)\leq 2d-1$.
\end{lem}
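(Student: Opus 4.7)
The plan is a short two-part argument, using Theorem \ref{thm:TNW_SpanningTreeSparsity} for the upper bound and the spanning tree property directly for the lower bound.

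For the lower bound $\delta(G)\geq d$, I would fix a $d$-tree decomposition $(G;T_1,\ldots,T_d)$ and observe that, since each $T_i$ is a spanning tree on $|V(G)|\geq 2$ vertices, every vertex $v\in V(G)$ satisfies $d_{T_i}(v)\geq 1$. Because the trees $T_1,\ldots,T_d$ are edge-disjoint in $G$, summing these lower bounds gives $d_G(v)\geq d_{T_1}(v)+\cdots+d_{T_d}(v)\geq d$, and hence $\delta(G)\geq d$.

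For the upper bound $\delta(G)\leq 2d-1$, I would invoke condition \ref{part:TNW_SpanningTreeSparsity_tight} of Theorem \ref{thm:TNW_SpanningTreeSparsity}, which gives $|E(G)|=d|V(G)|-d$, so that the handshake lemma yields
\[
\sum_{v\in V(G)} d_G(v) \;=\; 2|E(G)| \;=\; 2d|V(G)|-2d.
\]
Therefore the average degree of $G$ is $2d-\tfrac{2d}{|V(G)|}$, and since $|V(G)|\geq 2$ this quantity is strictly less than $2d$. Hence $\delta(G)$, being at most the average degree, satisfies $\delta(G)<2d$, and since degrees are integers we conclude $\delta(G)\leq 2d-1$.

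There is no real obstacle here; both bounds are immediate from the definition of $\mathcal{G}_d$ and Theorem \ref{thm:TNW_SpanningTreeSparsity}. The only small point to note is the necessity of the hypothesis $|V(G)|\geq 2$: it is needed both to ensure each spanning tree has at least one edge incident to every vertex (giving the lower bound) and to ensure the correction term $\tfrac{2d}{|V(G)|}$ in the average-degree calculation is well-defined and positive (giving the upper bound).
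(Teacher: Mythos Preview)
Your proof is correct and follows exactly the approach the paper indicates: the paper does not write out a proof but states that the lemma ``is a consequence of Theorem \ref{thm:TNW_SpanningTreeSparsity} and the definition of $\mathcal{G}_d$,'' which is precisely what you do (the spanning-tree definition for the lower bound, the edge count from Theorem \ref{thm:TNW_SpanningTreeSparsity}\ref{part:TNW_SpanningTreeSparsity_tight} plus handshaking for the upper bound).
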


This lemma implies we only need to consider $d$-dimensional $j$-extensions of $d$-tree decompositions where $0 \leq j \leq d-1$.

\begin{defn}
A $d$-tree decomposition $\G'=(G';T_1',\ldots,T_d')$ is said to be obtained from a $d$-tree decomposition $\G=(G;T_1,\ldots,T_d)$ by a {\em $d$-tree $j$-extension}, for some $0\leq j \leq d-1$,
if 
\begin{enumerate}
	\item $G'$ is obtained from $G$ by a $d$-dimensional $j$-extension, and,
	\item for each $1\leq i \leq d$, $T_i'$ is obtained from $T_i$ by a $1$-dimensional $k_i$-extension, for some $0\leq k_i \leq d-1$. 
\end{enumerate}
{Note that $\sum_{i=1}^{d} k_i =j$.}
\end{defn}

\begin{prop}\label{prop:InductiveConstruction_UnionAndTrees}
Let $\G'\in \G_d$ be a $d$-tree decomposition with $\G'\not=\K_1$. Then there exists a $d$-tree decomposition $\G\in \G_d$ such that 
	$\G'$ is a $d$-tree $j$-extension of $\G$. 
\end{prop}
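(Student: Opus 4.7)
The plan is to pick a low-degree vertex of $G'$ and perform a coordinated $1$-dimensional reduction on each monochrome spanning tree simultaneously. Since $\G' \neq \K_1$ we have $|V(G')| \geq 2$, so Lemma \ref{lem:MinDegree} supplies a vertex $v \in V(G')$ with $d_{G'}(v) = d + j$ for some $0 \leq j \leq d-1$. Let $d_i'$ denote the degree of $v$ in $T_i'$; this is at least $1$ because $T_i'$ spans $G'$, and $\sum_{i=1}^{d} d_i' = d+j$. Setting $k_i := d_i'-1$ therefore gives $k_i \geq 0$ and $\sum_{i=1}^{d} k_i = j$, exactly matching the numerical constraints in the definition of a $d$-tree $j$-extension.

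For each colour $i$, I would construct $T_i$ by performing a $1$-dimensional $k_i$-reduction of $T_i'$ at $v$ as follows. The neighbours $w_1^i, \ldots, w_{d_i'}^i$ of $v$ in $T_i'$ are distinct (since $T_i'$ is simple), and because $T_i'$ is a tree, these $d_i'$ neighbours lie in $d_i'$ distinct components of the forest $T_i'-v$ (otherwise two paths between some pair would create a cycle in $T_i'$). I would then introduce $k_i = d_i'-1$ new edges of colour $i$ forming any tree on $\{w_1^i, \ldots, w_{d_i'}^i\}$ - for concreteness, the path $w_1^i w_2^i, w_2^i w_3^i, \ldots, w_{d_i'-1}^i w_{d_i'}^i$. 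Consecutive vertices on this path sit in different components of $T_i'-v$, so none of these added edges was already present; hence $T_i$ is simple, and Lemma \ref{lem:HennebergMovesOnTrees} then guarantees that $T_i$ is a spanning tree on $V(G')-v$. Taking $G$ to be the multi-graph formed as the edge-disjoint union of $T_1, \ldots, T_d$ and setting $\G := (G; T_1, \ldots, T_d) \in \G_d$, the $\sum_{i=1}^{d} k_i = j$ added edges all have endpoints in $N_{G'}(v)$, so $G'$ is a $d$-dimensional $j$-extension of $G$ with deleted edge set the union of the colour classes, and consequently $\G'$ is a $d$-tree $j$-extension of $\G$.

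The main obstacle I expect is the simplicity check for each $T_i$: a priori, the new edge of colour $i$ chosen to reconnect two components of $T_i'-v$ could coincide with an edge already present in $T_i'-v$, which would prevent $T_i$ from being a simple tree. The component-separation observation above - that the two endpoints necessarily lie in distinct components, so no such coincidence occurs - is precisely what resolves this, and together with the degree bookkeeping supplied by Lemma \ref{lem:MinDegree} forms the crux of the argument. The remaining verifications, namely that $V_G(F) \subseteq N_{G'}(v)$ and that the trees across colours are edge-disjoint in the multi-graph $G$, are built into the construction.
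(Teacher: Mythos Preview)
Your argument is correct and follows essentially the same route as the paper: pick a vertex $v$ of degree $d+j$ via Lemma~\ref{lem:MinDegree}, set $k_i=d_{T_i'}(v)-1$, perform a $1$-dimensional $k_i$-reduction on each $T_i'$ by adding a tree on $N_{T_i'}(v)$, and invoke Lemma~\ref{lem:HennebergMovesOnTrees}. The paper's proof is virtually identical, differing only in that it leaves the choice of tree on $N_{T_i'}(v)$ unspecified rather than fixing a path, and it records the bound $k_i\leq d-1$ explicitly (which you get for free from $\sum k_i=j\leq d-1$). Your component-separation observation, ensuring that the added edges are genuinely new so that each $T_i$ stays simple and Lemma~\ref{lem:HennebergMovesOnTrees} applies, is a point the paper leaves implicit; it is a welcome clarification rather than a divergence in method.
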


\begin{proof}
Let $\G'=(G';T_1',\ldots,T_d')$ where $G'=(V',E')$ and $T_i'=(V',E_i')$ for each $i=1,\ldots,d$.
By Lemma \ref{lem:MinDegree}, there exists $v\in V'$ such that $d\leq d_{G'}(v) \leq 2d-1$.
For each tree $T_i'$, perform a Henneberg reduction at $v$ which forms the graph $T_i=(V'-v, E_i)$ by deleting $v$ and adding a set of edges $F_i$ between the vertices in $N_{T_i'}(v)$ such that $(N_{T_i'}(v),F_i)$ is a tree. 

Since $T_1',T_2',\ldots, T_d'$ are spanning trees of $G'$,  we know $d_{T_i'}(v)\geq 1$ for all $1\leq i\leq d$. By Lemma \ref{lem:MinDegree}, we have that
\[
\sum_{i=1}^{d} d_{T_i'}(v)=d_{G'}(v)\leq 2d-1.
\]
Hence $1 \leq d_{T_i'}(v)\leq d$ for all $1\leq i\leq d$.
 Since $(N_{T_i'}(v),F_i)$ is a tree, we must have $|F_i|=|N_{T_i'}(v)|-1=d_{T_i'}(v)-1$. In other words, the inverse Henneberg move which forms $T_i$ from $T_i'$ is a $1$-dimensional $k_i$-reduction, with $k_i=d_{T_i'}(v)-1$. Which implies $0\leq k_i \leq d-1$. 

Let $G=(V'-v,E_1\cup\cdots\cup E_d)$.
By Lemma \ref{lem:HennebergMovesOnTrees}, $T_1,\ldots,T_d$ are trees and so $\G=(G;T_1,\ldots,T_d)$ is a $d$-tree decomposition.
Finally, the move which forms $G$ from $G'$ deletes the vertex $v$, and then adds $\sum_{i=1}^d (d_{T_i'}(v)-1)=d_{G'}(v)-d$ edges between the vertices in $N_{G'}(v)$. This is, by definition, a $d$-dimensional $j$-reduction, where $j= d_{G'}(v)-d$, and since $d\leq d_{G'}(v)\leq 2d-1$, we have $0\leq j\leq d-1$. 
\end{proof}

\begin{cor}\label{cor:InductiveConstruction_TreesAndGraph}
Let $\G=(G;T_1,\ldots,T_d)$ be a $d$-tree decomposition.
Then,
 there exists a sequence of $d$-tree decompositions
\[
\K_1= \G^{(1)} \rightarrow \G^{(2)} \rightarrow \cdots \rightarrow \G^{(n)}=\G
\]
such that for all $2\leq i\leq n$, $\G^{(i)}$ is obtained from $\G^{(i-1)}$ by a $d$-tree $j$-extension, for some $0\leq j \leq d-1$.
\end{cor}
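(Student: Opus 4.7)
The plan is to deduce this corollary from Proposition \ref{prop:InductiveConstruction_UnionAndTrees} by a straightforward induction on the number of vertices $n = |V(G)|$. This is essentially the routine bookkeeping step: the proposition provides a single reduction move, and the corollary just iterates it.

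For the base case $n = 1$, the only $d$-tree decomposition on a single vertex is $\K_1$ itself (since each $T_i$ must be a spanning tree on one vertex, hence has empty edge set), so the trivial sequence $\G^{(1)} = \K_1 = \G$ suffices. For the inductive step, suppose the statement holds for every $d$-tree decomposition whose underlying graph has at most $n-1$ vertices, and let $\G = (G; T_1, \ldots, T_d)$ be a $d$-tree decomposition with $|V(G)| = n \geq 2$. Since $\G \neq \K_1$, Proposition \ref{prop:InductiveConstruction_UnionAndTrees} supplies a $d$-tree decomposition $\widehat{\G} \in \G_d$ such that $\G$ is a $d$-tree $j$-extension of $\widehat{\G}$ for some $0 \leq j \leq d-1$. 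By construction in the proposition, $\widehat{\G}$ is obtained by deleting a single vertex, so its underlying graph has $n-1$ vertices.

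Applying the inductive hypothesis to $\widehat{\G}$ yields a sequence
\[
\K_1 = \G^{(1)} \rightarrow \G^{(2)} \rightarrow \cdots \rightarrow \G^{(n-1)} = \widehat{\G}
\]
of $d$-tree $j$-extensions, and appending $\G^{(n)} := \G$ to the end produces the required sequence for $\G$.

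Since the proposition has already done all of the combinatorial heavy lifting (the analysis of vertex degrees via Lemma \ref{lem:MinDegree}, the simultaneous reduction on each spanning tree, and the verification via Lemma \ref{lem:HennebergMovesOnTrees} that the resulting subgraphs remain trees), there is no real obstacle here; the only thing to double-check is that the decomposition produced by the proposition genuinely has strictly fewer vertices, which is immediate because the move in question is a reduction removing a single vertex. The induction therefore terminates in at most $|V(G)|$ steps.
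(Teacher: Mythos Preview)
Your proof is correct and is precisely the intended argument: the paper states this corollary without proof, leaving the obvious induction on $|V(G)|$ via Proposition~\ref{prop:InductiveConstruction_UnionAndTrees} implicit, and that is exactly what you have written out.
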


\section{Realisations for 2-tree decompositions}\label{sec:2D_Geometry} 
{In this section, we} apply our inductive construction from Corollary \ref{cor:InductiveConstruction_TreesAndGraph} to show that every $2$-tree decomposition has a plane realisation. 
To do this, we first extend the definitions of a well-positioned framework $(G,p)$ in $\bR^d$ and an induced framework colouring $\kappa_p$, which were given in Section \ref{Pre:Realisations}, to accommodate multi-graphs.  

Let $G=(V,E)$ be a multi-graph with no loops and 
let $p:V\to \mathbb{R}^d$ be an injective map. As before we refer to the pair $(G,p)$ as a {\emph{framework}}
in $\mathbb{R}^d$.
Suppose a pair of vertices $u,v\in V$ are joined by exactly $t$  edges, where $1\leq t\leq d$. 
The pair $\{p(u),p(v)\}$ is said to be {\em well-positioned} if  there exist exactly $t$ distinct elements  $j_1,\ldots,j_t\in\{1,\ldots,d\}$ such that  
\[
\| p(u)-p(v) \|_{\infty} = |(p(u)-p(v))\cdot e_{j_k}|
\]
for $k=1,2,\ldots, t$. We refer to $j_1,\ldots,j_t$ as the {\em framework colours} for the pair $\{p(u),p(v)\}$.
The framework $(G,p)$, on the multi-graph $G$, is said to be {\em well-positioned} if, for every edge $uv\in E(G)$, the pair $\{p(u),p(v)\}$ is well-positioned. 

When a framework $(G,p)$ is well-positioned, the multi-graph $G$ inherits an edge-labelling $\kappa_p : E{(G)} \rightarrow \{1,2,\ldots,d\}$ whereby each of the $t$ edges connecting a pair of vertices $u$ and $v$ is assigned one of the distinct framework colours $j_1,\ldots,j_t$ for the pair $\{p(u),p(v)\}$. The edge-labelling $\kappa_p$   is referred to as a {\em framework colouring} for $(G,p)$. Note that this framework colouring is unique up to permutation of colours between parallel edges. Such permutations will not create any problems in what follows. The subgraph $G[\kappa_p^{-1}(k)]$ spanned by edges with framework colour $k$ is again referred to as a {\em monochrome subgraph} of $G$.

{Given a $d$-tree decomposition $\G=(G;T_1,\ldots,T_d)$ of a multi-graph $G$, a {\em realisation} of $\G$} is a framework $(G,p)$ in $\mathbb{R}^d$ with the property that $T_1,\ldots,T_d$ are the induced monochrome subgraphs of $G$, for some choice of framework colouring $\kappa_p$. 
Note that by relabelling $T_1,\ldots,T_d$ we may assume, without loss of generality, that for $i=1,\ldots,d$, edges $uv$ in $T_i$ satisfy, 
\[\|p(u)-p(v)\|_{\infty} = |(p(u)-p(v))\cdot e_i|.\]

\begin{prop}\label{prop:InductiveConstruction_Realisation0}
Let $\G=(G;T_1,T_2)$ be a $2$-tree decomposition and suppose $\G'=(G';T_1',T_2')$ is a $2$-tree decomposition which is obtained by applying a $2$-tree $0$-extension to $\G$.

If $\G$ has a realisation $p$ in the plane then $\G'$ has a realisation $p'$ in the plane with the property that $p'(w)=p(w)$ for all $w\in V(G)$.
\end{prop}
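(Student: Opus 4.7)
The plan is as follows. Unpacking the definition of a 2-tree 0-extension, the new vertex $v$ is attached to $G$ by $d+j=2$ new edges, and to each spanning tree $T_i'$ by a 1-dimensional $k_i$-extension with $k_1+k_2=j=0$. Consequently $v$ has degree one in both $T_1'$ and $T_2'$, so there exist unique $u_i\in V(G)$ with $vu_i\in E(T_i')$. Two cases then arise: either $u_1\neq u_2$, giving two new edges to distinct vertices of $G$; or $u_1=u_2=u$, giving two parallel edges between $v$ and $u$ in $G'$.

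I would set $p'(w)=p(w)$ for all $w\in V(G)$, so that the colouring $\kappa_p$ is inherited on the edges of $G$ and the corresponding sub-framework remains well-positioned. It then remains to position $p'(v)$ so that $p'$ stays injective and each new edge $vu_i$ receives framework colour $i$; this will simultaneously guarantee well-positioning of $(G',p')$. Since the colour-domination conditions are open in $p'(v)$ and the existing vertex set $p(V(G))$ is finite, the key point is to exhibit a non-empty feasible region.

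In the case $u_1\neq u_2$, the feasible region is
\[
R=\bigl\{x\in\bR^2:\,|x_1-p(u_1)_1|>|x_2-p(u_1)_2|\bigr\}\cap\bigl\{x\in\bR^2:\,|x_2-p(u_2)_2|>|x_1-p(u_2)_1|\bigr\}.
\]
I would exhibit a point in $R$ by taking $x_0=(p(u_2)_1,p(u_1)_2)$: if $p(u_1)$ and $p(u_2)$ differ in both coordinates, then $x_0$ itself satisfies both strict inequalities; if they share one coordinate then injectivity of $p$ forces them to differ in the other, and a short displacement of $x_0$ along the shared axis lies in $R$. In the case $u_1=u_2=u$, the definition of well-positioning on a multi-edge of multiplicity $2$ forces $|(p'(v)-p(u))\cdot e_1|=|(p'(v)-p(u))\cdot e_2|$, so I would place $p'(v)=p(u)+t(1,1)$ for a sufficiently small $t>0$ and use the freedom to permute framework colours across parallel edges to assign one to $T_1'$ and the other to $T_2'$. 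In both sub-cases the feasible region for $p'(v)$ is open and non-empty, so $p'(v)$ can be chosen to avoid the finite set $p(V(G))$. The main (mild) obstacle is the degenerate coordinate-alignment in the first case, handled by the perturbation argument above.
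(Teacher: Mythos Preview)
Your proof is correct and takes essentially the same approach as the paper: the paper's intersection point of the horizontal line through $p(u_1)$ and the vertical line through $p(u_2)$ is precisely your $x_0=(p(u_2)_1,p(u_1)_2)$, and the parallel-edge case is handled identically by placing $p'(v)$ on a line of slope $\pm 1$ through $p(u)$. If anything, you are slightly more careful than the paper in explicitly treating the degenerate sub-case where $p(u_1)$ and $p(u_2)$ share a coordinate, which the paper absorbs into the phrase ``within a sufficiently small distance of the intersection''.
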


\begin{proof}
Suppose the $2$-dimensional $0$-reduction which forms $G$ from $G'$ deletes the vertex $v$. 
	By the definition of a $2$-dimensional $0$-reduction, $d_{G'}(v)=2$, and $G$ is formed from $G'$ by deleting $v$ and the two edges incident {to} $v$. Since $T_1'$ and $T_2'$ are both spanning trees of $G'$, this implies that $v$ is a leaf node of both $T_1'$ and $T_2'$. Hence $T_i=T_i'-v$ for $i\in\{ 1,2 \}$.
	Let $N_{G'}(v)=\{x,y\}$ where $x$ and $y$ may or may not be distinct. Without loss of generality, suppose $vx\in E(T_1')$ and $vy\in E(T_2')$. Since $G=G'-v$, we know that $(G'-v,p)$ is well-positioned. So let $p'(w)=p(w)$ for all $w\in V(G)$. For $(G',p')$ to be well-positioned, it only remains to find a position for $p'(v)$ such that the pairs $\{p'(v),p'(x)\}$ and $\{p'(v),p'(y)\}$ are well-positioned. 
		
If  $x\neq y$ then $p'(x)\neq p'(y)$. If we place $v$ within a sufficiently small distance of the intersection of the lines  $p'(x)+\lambda (1,0)$ and  $p'(y)+\mu (0,1)$, then $\{p'(v),p'(x)\}$ and $\{p'(v),p'(y)\}$ will have framework colours $1$ and $2$ respectively. 
	
	Now suppose $x=y$. Then $v$ is met by a double-edge; one edge in $T_1'$ and the other in $T_2'$. To satisfy the constraints induced by this colouring, we must place $p'(v)$ such that
\[
\|p'(v)-p'(x)\|_{\infty} = |v_1-x_1| = |v_2-x_2|.
\]	
If we place $p'(v)$ at any point of the lines $p'(x)+\lambda(1,1)$ or $p'(x)+\lambda(1,-1)$, then we will satisfy these constraints, and, since $vx$ is a double-edge, the pair $\{p'(v),p'(x)\}$ will be well-positioned. 

In both cases we can choose  $p'(v)$ so that it is not coincident with another vertex of $G'$. Moreover, $(G',p')$ is well-positioned, and the induced monochrome subgraphs  are $T_1'$ and $T_2'$.
\end{proof}

To complete the inductive construction we  require a geometric method of realising  $2$-tree $1$-extensions which accommodates parallel edges. 

\begin{prop}\label{prop:InductiveConstruction_Realisation1}
Let $\G=(G;T_1,T_2)$ be a $2$-tree decomposition and suppose $\G'=(G';T_1',T_2')$ is a $2$-tree decomposition which is obtained by applying a  $2$-tree $1$-extension to $\G$.

If $\G$, and every $2$-tree decomposition with fewer vertices than $\G$, has a realisation $p$ in the plane then $\G'$ has a realisation $p'$ in the plane.
\end{prop}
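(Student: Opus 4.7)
The plan is to extend the given realisation $p$ of $\G$ to a realisation $p'$ of $\G'$ by setting $p'(w) = p(w)$ for every $w \in V(G)$ and placing the new vertex $v$ appropriately. By swapping the labels of $T_1, T_2$ if necessary, I may assume the $2$-tree $1$-extension has $k_1 = 1$, $k_2 = 0$, so the deleted edge $e = xy$ lies in $T_1$, the new vertex $v$ has edges $vx, vy \in E(T_1')$, and a third edge $vz \in E(T_2')$ for some $z \in V(G)$ (with $z = x$ or $z = y$ allowed, creating parallel edges at $v$). Since $p$ realises $\G$, the edge $xy$ has framework colour $1$, i.e., $p(y) \in H(p(x))$, where I write $H(u) := \{q \in \bR^2 : |q_1 - u_1| > |q_2 - u_2|\}$ for the open ``horizontal sector'' at $u$ and $V(u) := \{q : |q_1 - u_1| < |q_2 - u_2|\}$ for the vertical sector.

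\emph{Case 1: $z \notin \{x, y\}$.} Here $p'(v)$ should lie in the open region
\[
R := H(p(x)) \cap H(p(y)) \cap V(p(z)),
\]
while avoiding the finite set $p(V(G))$; since $R$ is open in $\bR^2$, once $R \neq \emptyset$ is established both conditions can be arranged by a small perturbation. Non-emptiness of $R$ is verified by case analysis on the horizontal coordinate $c = p(z)_1$ relative to $p(x)_1$ and $p(y)_1$. If $c$ lies strictly between $p(x)_1$ and $p(y)_1$, the vertical line $q_1 = c$ crosses the middle parallelogram-shaped connected component of $H(p(x)) \cap H(p(y))$ in an open segment, and any point on this segment with second coordinate different from $p(z)_2$ automatically lies in $V(p(z))$. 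Otherwise, one uses a vertical slice just past $c$ inside the unbounded ``cone'' component of $H(p(x)) \cap H(p(y))$ on the appropriate side of $p(x)$ or $p(y)$; a direct computation produces a valid point of $R$.

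\emph{Case 2: $z \in \{x, y\}$.} Suppose $z = x$ (the other case is symmetric). The well-positioned condition for the double edge $\{p'(v), p(x)\}$ carrying two distinct framework colours forces $|p'(v)_1 - p(x)_1| = |p'(v)_2 - p(x)_2|$, so $p'(v)$ must lie on one of the two diagonal lines through $p(x)$; the colour-$1$ constraint on $vy$ additionally requires $p'(v) \in H(p(y))$. Parametrising the positive-slope diagonal as $p'(v) = p(x) + t(1,1)$ and setting $u = p(y)_1 - p(x)_1$, $w = p(y)_2 - p(x)_2$, the condition $p'(v) \in H(p(y))$ reduces to $|t - u| > |t - w|$. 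Using $|u| > |w|$ (the colour-$1$ condition on $xy$), a short algebraic manipulation shows this inequality holds on one of the two open rays $\{t < (u+w)/2\}$ or $\{t > (u+w)/2\}$, from which one selects $t \neq 0$ avoiding the finitely many coordinates of existing vertices on the diagonal.

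The principal obstacle is establishing non-emptiness of $R$ in Case 1 for awkward placements of $p(z)$: the set $H(p(x)) \cap H(p(y))$ splits into three connected components (the middle parallelogram and two unbounded cones past $p(x)$ and past $p(y)$) and $V(p(z))$ interacts differently with each, so the case analysis must address each of these configurations, together with boundary positions of $p(z)$, carefully. The hypothesis that every $2$-tree decomposition with fewer vertices than $\G$ has a realisation is not obviously invoked by this direct construction, but would provide fall-back flexibility (by allowing one to re-realise $\G$ via a different reduction path) should a genuinely degenerate configuration of $(p(x), p(y), p(z))$ arise.
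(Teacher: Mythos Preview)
Your plan to keep $p'|_{V(G)}=p$ and only place the new vertex $v$ overlooks the central difficulty of the multi-graph setting. Suppose the deleted edge $e=xy\in E(T_1)$ has a parallel copy $xy\in E(T_2)$. In $G$ the pair $\{x,y\}$ carries \emph{two} edges, so the well-positioned condition forces $|p(x)_1-p(y)_1|=|p(x)_2-p(y)_2|$; in particular $p(y)\notin H(p(x))$, contradicting the premise of both your Case~1 and your Case~2 computations. Worse, in $G'$ the pair $\{x,y\}$ carries only the single $T_2'$-edge, whose well-positionedness requires strict inequality $|p'(x)_1-p'(y)_1|\neq|p'(x)_2-p'(y)_2|$. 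Thus $p'|_{V(G)}=p$ is simply not admissible here: one of $p(x),p(y)$ must be moved, and moving it may break well-positionedness of other parallel edges at that vertex, forcing a cascading perturbation along chains of parallel edges.

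The paper's proof is organised around exactly this issue. When the parallel edge is absent it proceeds essentially as you do (placing $v$ on the line through the two same-colour neighbours, intersected with a suitable line through the third). When the parallel edge is present it perturbs one endpoint of the pair together with every vertex joined to it by a path of parallel edges in $G-e$, and must distinguish several subcases according to the relative positions of $p(x),p(y),p(z)$ to decide which endpoint to move and where to put $v$. In one degenerate subcase (the three points collinear on a line of slope $\pm 1$ in the wrong order) no perturbation of the given $p$ suffices, and the paper instead contracts the parallel $xy$-edges to obtain a strictly smaller $2$-tree decomposition, realises that by the inductive hypothesis, and expands back. So the hypothesis on smaller decompositions, which you suspected might be redundant, is in fact used.
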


\begin{proof}
 Suppose the $2$-dimensional $1$-reduction which forms $G$ from $G'$ deletes the vertex $v$. 	
	By the definition of a $2$-dimensional $1$-reduction, $d_{G'}(v)=3$, and $G$ is formed from $G'$ by deleting $v$ and all three edges incident {to} $v$, before adding a single edge between the vertices in $N_{G'}(v)$. Since $T_1'$ and $T_2'$ are both spanning trees of $G'$, this implies that $v$ is a leaf node of one of these trees, and is incident {to} two edges {of} the other tree. Without loss of generality assume $d_{T_1'}(v)=1$ and $d_{T_2'}(v)=2$. Let $N_{T_1'}(v)=\{x\}$	and $N_{T_2'}(v)=\{y,z\}$, where {$y\not\in\{x, z\}$,} but potentially $x=z$. Then,  $T_1=T_1'-v$. In order for $T_2$ to be a tree, it must be formed by deleting $v$ from $T_2'$, and then adding an edge $e$ between $y$ and $z$. In other words, $T_2=T_2'-v+e$.
Let $(G,p)$ be a realisation for $\G$ in the plane.	 {We shall use $p$ to construct a realisation $p'$ of $G'$.}
		
		For all pairs of points $(a,b),(c,d)\in \mathbb{R}^2$, write $(a,b)\leq (c,d)$ if $a\leq c$ and $b\leq d$ and write $(a,b)<(c,d)$ if, in addition, $(a,b)\not=(c,d)$.
Note that reflecting the set of points $\{p(w):w\in V(G)\}$  through either of the coordinate axes in $\bR^2$ generates another realisation of $\G$ in which the  framework colours for all pairs $\{p(u),p(w)\}$ are preserved. Thus we may assume, without loss of generality, that $p(z)<p(y)$.

\begin{case}Suppose  $x\neq z$.\\
If $yz\not\in E(T_1)$, then let $p'(w)=p(w)$ for all $w\in V(G)$. Since $yz\in E(T_2)$, any placement of $p'(v)$ on the line $p'(y)+\mu(p'(z)-p'(y))$, $\mu\in\bR$, which is distinct from $p'(y)$ and $p'(z)$, will ensure the pairs $\{p'(v),p'(y)\}$ and $\{p'(v),p'(z)\}$ are well-positioned with framework colour $2$. Similarly, for all $-1< a < 1$, any placement of $p'(v)$ on the line $p'(x)+\lambda(1,a)$, $\lambda\in\bR$, which is distinct from $p'(x)$, will ensure $\{p'(v),p'(x)\}$ is well-positioned with framework colour $1$. Hence, if we place $p'(v)$ at the point of intersection of the two lines $p'(x)+\lambda (1,a)$ and $p'(y)+\mu(p'(z)-p'(y))$, and $a$ is chosen so that this intersection is not coincident with any other vertex of $G'$, then $(G',p')$ is a realisation of $\G'$. See Figure \ref{subfig:MainProof_FirstSetOfCases_1}.

If $yz\in E(T_1)$, then proceed as follows: 
Let $o$ denote the intersection of the lines $p(x)+\lambda(1,0)$ and $p(y)+\mu(1,1)$ and choose $\epsilon>0$ and $\delta>0$. For a pair of vertices $u,w\in V(G)$, write $u\sim w$ if either $u=w$, or, $w$ is joined to $u$ by a sequence of parallel edges in $G-e$.

If $o\leq p(z)$ then, informally, $p'(v)$ may be placed to the right of $o$ and $p(z)$ translated upwards. See Figure \ref{subfig:MainProof_FirstSetOfCases_2}. Any vertex in $G$ which is joined to $z$ by a sequence of parallel edges in $G-e$  must also be translated upwards. Formally, define $p'$ by setting $p'(v) = o + \delta(1,0)$ and, for each $w\in V(G)$,
\[
p'(w)=
\begin{cases}
	p(w)+\epsilon(0,1)	& \text{if } w\sim z, 
	\text{ and, }\\
	p(w)							& \text{otherwise.}
\end{cases}
\]

If $p(z)< o<p(x)$ then, informally, place $p'(v)$ below $p(z)$ and then translate $p(z)$ upwards. See Figure \ref{subfig:MainProof_FirstSetOfCases_3}. Any vertex in $G$ which is joined to $z$ by a sequence of parallel edges in  $G-e$  must also be translated upwards. Formally, define $p'$ by setting $p'(v) = p(z) + \delta(0,-1)$ and, for each $w\in V(G)$,
\[
p'(w)=
\begin{cases}
	p(w)+\epsilon(0,1)	& \text{if }  w\sim z, 
	\text{ and, }\\
	p(w)							& \text{otherwise.}
\end{cases}
\]

If $p(z)<o\leq p(y)$ and $p(x)< o$ then, informally, place $p'(v)$ above $p(y)$ and then translate $p(y)$ downwards. See Figure \ref{subfig:MainProof_SecondSetOfCases_1}.
Any vertex in $G$ which is joined to $y$ by a sequence of parallel edges in $G-e$  must also be translated downwards.
Formally, define $p'$ by setting $p'(v) = p(y) + \delta(0,1)$ and, for each $w\in V(G)$, 
\[
p'(w)=
\begin{cases}
	p(w)+\epsilon(0,-1)	& \text{if } w\sim y, 
	\text{ and, }\\
	p(w)							& \text{otherwise.}
\end{cases}
\]

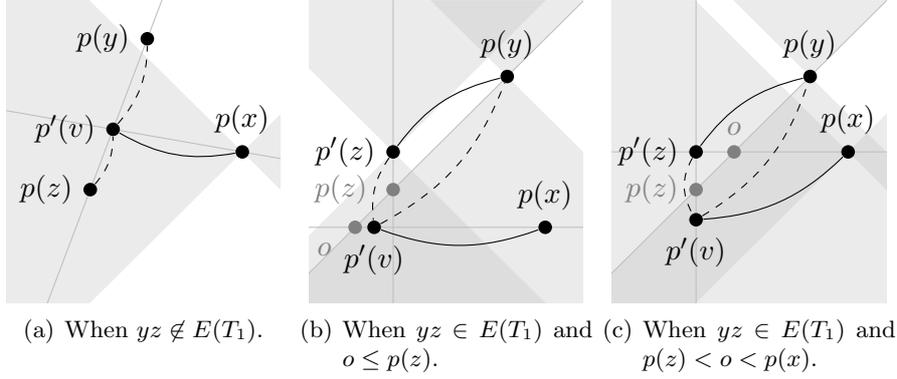
\begin{figure}
\centering
\subfigure[When $yz\not\in E(T_1)$.]
  {\label{subfig:MainProof_FirstSetOfCases_1}
	\begin{tikzpicture}[scale = .5]
	  \clip (-2.2,-3) rectangle (5,5); 
	  \node[vertex, label = left: $p(z)$] (z) at (0,0){}; 
	  \node[vertex, label = left: $p(y)$] (y) at (3-1.5,3+1){}; 
	  \node[vertex, label = $p(x)$] (x) at (4,1){}; 
	  \node (x') at ([shift=({170:-.2})]x) {}; 
	  \node[vertex, label = left: $p'(v)$] (v') at (intersection of y--z and x--x') {}; 
	  \draw[dashed, bend right = 20] (z) to (v') (v') to (y); 
	  \draw[bend right = 20] (v') to (x);
	  \begin{scope}[on background layer]	  
	    \clip (-2.2,-3) rectangle (5,5); 
	    \draw[extended line, lightgray] (z) to (y); 
	    \draw[extended line, lightgray] (v') to (x); 
	    \foreach \i in {0,1,2,3}{\node (x\i) at ([shift=({45 +\i*90:10})]x) {};} 
	    \path[fill=gray!50, opacity =.3] (x.center) -- (x1.center) -- (x2.center) -- (x.center);
	    \path[fill=gray!50, opacity =.3] (x.center) -- (x3.center) -- (x0.center) -- (x.center);
	  \end{scope}
	\end{tikzpicture}
  }
\subfigure[When $yz\in E(T_1)$ and $o\leq p(z)$.]
  {\label{subfig:MainProof_FirstSetOfCases_2}
	\begin{tikzpicture}[scale = .5]
	  \clip (-2.2,-3) rectangle (5,5); 
	  \node[vertex, color= gray, label = left: \color{gray} $p(z)\phantom{.}$] (z) at (0,0){}; 
	  \node[vertex, label = above: $p(y)$] (y) at (3,3){}; 
	  \node[vertex, label = $p(x)$] (x)  at ( 4,-1){}; 
	  \node(x') at ([shift=({0:-.2})]x) {}; 
	  \node[vertex, color = gray, label = below left: \color{gray} $o\phantom{.}$] (o) at (intersection of y--z and x--x') {}; 
	  \node[vertex, label = left: $p'(z)$] (z') at ([shift=({90:1})]z){}; 
	  \node[vertex, label = below: $p'(v)$] (v') at ([shift=({0:.5})]o){}; 
	  \draw[dashed, bend right = 20] (z') to (v') (v') to (y); 
	  \draw[bend right = 20] (v') to (x) (y) to (z');
	  \begin{scope}[on background layer]	  
	    \clip (-2.2,-3) rectangle (5,5); 
		  \draw[extended line, lightgray] (z) to (y); 
		  \draw[extended line, lightgray] (o) to (x); 
		  \draw[extended line, lightgray] (z) to (z'); 
	    \foreach \w in {y,z'} 
	      {
	      \foreach \i in {0,1,2,3}{\node (\w\i) at ([shift=({45 +\i*90:10})]\w) {};} 
		    \path[fill=gray!50, opacity =.3] (\w.center) -- (\w1.center) -- (\w0.center) -- (\w.center);
		    \path[fill=gray!50, opacity =.3] (\w.center) -- (\w3.center) -- (\w2.center) -- (\w.center); 
		    }  
	  \end{scope}
	\end{tikzpicture}
	  }
\subfigure[When $yz\in E(T_1)$ and $p(z)<o<p(x)$.]
  {\label{subfig:MainProof_FirstSetOfCases_3}
	\begin{tikzpicture}[scale = .5]
	  \clip (-2.2,-3) rectangle (5,5); 
	  \node[vertex, color= gray, label = left: \color{gray} $p(z)$] (z) at (0,0){}; 
	  \node[vertex, label = above: $p(y)$] (y) at (3,3){}; 
	  \node[vertex, label = $p(x)$] (x)  at ( 4,1){}; 
	  \node(x') at ([shift=({0:-.2})]x) {}; 
	  \node[vertex, color = gray, label = above: \color{gray} $o$] (o) at (intersection of y--z and x--x') {}; 
	  \node[vertex, label = left: $p'(z)$] (z') at ([shift=({90:1})]z){}; 
	  \node[vertex, label = below: $p'(v)$] (v') at ([shift=({-90:.8})]z){}; 
	  \draw[dashed, bend right = 20] (v') to (y); 
	  \draw[dashed, bend right = 30] (z') to (v');
	  \draw[bend right = 20] (v') to (x) (y) to (z');
	  \begin{scope}[on background layer]	  
	    \clip (-2.2,-3) rectangle (5,5); 
		  \draw[extended line, lightgray] (z) to (y); 
		  \draw[extended line, lightgray] (o) to (x); 
		  \draw[extended line, lightgray] (z) to (z'); 
	    \foreach \w in {x,y,z'} 
	      {\foreach \i in {0,1,2,3}{\node (\w\i) at ([shift=({45 +\i*90:10})]\w) {};}} 
	    \path[fill=gray!50, opacity =.3] (y.center) -- (y1.center) -- (y0.center) -- (y.center);
	    \path[fill=gray!50, opacity =.3] (y.center) -- (y3.center) -- (y2.center) -- (y.center);   
	    \path[fill=gray!50, opacity =.3] (x.center) -- (x1.center) -- (x2.center) -- (x.center);
	    \path[fill=gray!50, opacity =.3] (x.center) -- (x3.center) -- (x0.center) -- (x.center);  
	  \end{scope}
	\end{tikzpicture}
  }
\caption{Some choices for $p'(v)$ in $(G',p')$ when $x,y$ and $z$ are distinct. Features of $(G',p')$ are shown in black, with solid and dashed black lines denoting edges in $T_1'$ and $T_2'$ respectively. Unless otherwise indicated, $p'(w) = p(w)$ for all $w\in V(G)$.
For each vertex $w\in\{x,y,z\}$ we highlight in grey either a shaded region or a line, such that any placement $P$ of $v$ within this region ensures the pair $(P,p'(w))$ satisfies the geometric constraints implied by the graph colouring of $G'$. In each case, our chosen placement, $p'(v)$, lies within the intersection of these three regions.
Note that in examples \subref{subfig:MainProof_FirstSetOfCases_2} and \subref{subfig:MainProof_FirstSetOfCases_3}, both $T_1$ and $T_2$ contain a $yz$-edge, so to ensure the pair $(p'(y),p'(z))$ is well-positioned after $yz\in E(T_2)$ is deleted, we shift $z$ upwards.
}
\end{figure}

If $p(z)<p(y)<o$ and $p(x)< o$ then, informally, place $p'(v)$ above $o$ and then translate $p(y)$ downwards. See Figure \ref{subfig:MainProof_SecondSetOfCases_2}.
Any vertex in $G$ which is joined to $y$ by a sequence of parallel edges in $G-e$  must also be translated downwards.
Formally, define $p'$ by setting $p'(v) = o + \delta(0,1)$ and, for each $w\in V(G)$, 
\[
p'(w)=
\begin{cases}
	p(w)+\epsilon(0,-1)	& \text{if } w\sim y, 
	\text{ and, }\\
	p(w)							& \text{otherwise.}
\end{cases}
\]

If $p(z)<o$ and $p(x)=o$ then $p(x)$, $p(y)$ and $p(z)$ are collinear.
First suppose $p(z)<p(y)<p(x)$.  Place $p'(v)$ at the intersection of the horizontal line through $p(x)$ and the vertical line through $p(y)$. Then translate $p(y)$ downwards. See Figure \ref{subfig:MainProof_SecondSetOfCases_3}. Any vertex in $G$ which is joined to $y$ by a sequence of parallel edges in $G-e$  must also be translated downwards.
Thus,  for each $w\in V(G)$ we define, 
\[
p'(w)=
\begin{cases}
	p(w)+\epsilon(0,-1)	& \text{if } w\sim y, 
	\text{ and, }\\
	p(w)							& \text{otherwise.}
\end{cases}
\]
Now suppose $p(z)<p(x)<p(y)$.  In this case a convenient placement for $v$ is not to hand. Instead, contract the parallel edges between $y$ and $z$ to form a new $2$-tree decomposition $\G^o=(G^o;T_1^o,T_2^o)$. Write  $V(G^o)=V(G)-\{y,z\}+\{w_0\}$ where $w_0$ is the vertex obtained by identifying $y$ and $z$.
Since $\G^o$ has fewer vertices than $\G$, there exists a realisation $p^o$ for $\G^o$ in the plane. From this realisation, we can construct a new realisation $p$ for $\G$ with the property that $p(z)<p(y)<p(x)$. Formally, set $p(z)=p^o(w_0)$, $p(y)=p^o(w_0)+\epsilon'(1,1)$ for some sufficiently small $\epsilon'>0$, and $p(w)=p^o(w)$ for all remaining $w\in V(G)$. Now   proceed as above.

\begin{figure}
\centering

\subfigure[When $p(z)< o < p(y)$ and $p(x)<o$.]
	{\label{subfig:MainProof_SecondSetOfCases_1}
	\begin{tikzpicture}[scale = .5]
	  \clip (-2,-2) rectangle (5,6); 
	  \node[vertex, label = below: $p(z)$] (z) at (0,0){}; 
	  \node[vertex, color = gray, label = right: \color{gray} $p(y)$] (y) at (3,3){}; 
	  \node[vertex, label = below: $\phantom{m}p(x)$] (x)  at (-1.5,1){}; 
	  \node(x') at ([shift=({0:-.2})]x) {}; 
	  \node[vertex, color = gray, label = above: \color{gray} $o$] (o) at (intersection of y--z and x--x') {}; 
	  \node[vertex, label = right: $p'(y)$] (y') at ([shift=({-90:1})]y){}; 
	  \node[vertex, label = right: $p'(v)\phantom{mmn}$] (v') at ([shift=({90:1})]y){}; 
	  \draw[dashed, bend left = 20] (z) to (v');
	  \draw[dashed, bend right = 30] (v') to (y'); 
	  \draw[bend right = 20] (v') to (x) (z) to (y');
	  \begin{scope}[on background layer]	  
	    \clip (-2,-2) rectangle (5,6); 
		  \draw[extended line, lightgray] (z) to (y); 
		  \draw[extended line, lightgray] (o) to (x); 
		  \draw[extended line, lightgray] (y) to (y'); 
	    \foreach \w in {x,z} 
	      {\foreach \i in {0,1,2,3}{\node (\w\i) at ([shift=({45 +\i*90:10})]\w) {};}} 
	    \path[fill=gray!50, opacity =.3] (z.center) -- (z1.center) -- (z0.center) -- (z.center);
	    \path[fill=gray!50, opacity =.3] (z.center) -- (z3.center) -- (z2.center) -- (z.center);   
	    \path[fill=gray!50, opacity =.3] (x.center) -- (x1.center) -- (x2.center) -- (x.center);
	    \path[fill=gray!50, opacity =.3] (x.center) -- (x3.center) -- (x0.center) -- (x.center);  
	  \end{scope}
	\end{tikzpicture}
	}
\subfigure[When $p(z) < p(y) < o$ and $p(x) < o$.]
	{\label{subfig:MainProof_SecondSetOfCases_2}  
	\begin{tikzpicture}[scale = .5]
	  \clip (-2,-2) rectangle (5,6); 
	  \node[vertex, label = left: $p(z)$] (z) at (0,0){}; 
	  \node[vertex, color = gray, label = right: \color{gray} $p(y)$] (y) at (3,3){}; 
	  \node[vertex, label = $p(x)$] (x)  at (1,4){}; 
	  \node(x') at ([shift=({0:-.2})]x) {}; 
	  \node[vertex, color = gray, label = right: \color{gray} $o$] (o) at (intersection of y--z and x--x') {}; 
	  \node[vertex, label = right: $p'(y)$] (y') at ([shift=({-90:1})]y){}; 
	  \node[vertex, label = above: $p'(v)$] (v') at ([shift=({90:.7})]o){}; 
	  \draw[dashed, bend left = 20] (z) to (v');
	  \draw[dashed, bend right = 10] (v') to (y'); 
	  \draw[bend right = 20] (v') to (x) (z) to (y');
	  \begin{scope}[on background layer]	  
	    \clip (-2,-2) rectangle (5,6); 
		  \draw[extended line, lightgray] (z) to (y); 
		  \draw[extended line, lightgray] (o) to (x); 
		  \draw[extended line, lightgray] (y) to (y'); 
		  	\draw[extended line, lightgray] (o) to (v'); 
	    \foreach \w in {y',z} 
	      {\foreach \i in {0,1,2,3}{\node (\w\i) at ([shift=({45 +\i*90:10})]\w) {};}} 
	    \foreach \w in {y',z}
	      {
	      \path[fill=gray!50, opacity =.3] (\w.center) -- (\w1.center) -- (\w0.center) -- (\w.center);
	      \path[fill=gray!50, opacity =.3] (\w.center) -- (\w3.center) -- (\w2.center) -- (\w.center);   
	      }
	\end{scope}
	\end{tikzpicture}
	}
\subfigure[When $p(z) < p(y) < p(x)$ and $p(x)= p(o)$.]
	{\label{subfig:MainProof_SecondSetOfCases_3}
	\begin{tikzpicture}[scale = .5]
	  \clip (-2,-2) rectangle (5,6); 
	  \node[vertex, label = left: $p(z)$] (z) at (0,0){}; 
	  \node[vertex, color = gray, label = right: \color{gray} $p(y)$] (y) at (3,3){}; 
	  \node[vertex, label = above: $p(x)\phantom{r}$] (x)  at (4.5,4.5){}; 
	  \node(x') at ([shift=({0:-.2})]x) {}; 
	  \node[vertex, label = right: $p'(y)$] (y') at ([shift=({0,-1})]y){}; 
	  \node[vertex, label = above left: $p'(v)$] (v') at (intersection of y--y' and x--x') {}; 
	  \draw[dashed, bend left = 20] (z) to (v');
	  \draw[dashed, bend right = 30] (v') to (y'); 
	  \draw[bend right = 30] (v') to (x);
	  \draw[bend right = 20] (z) to (y');
	  \begin{scope}[on background layer]	  
	    \clip (-2,-2) rectangle (5,6); 
		  \draw[extended line, lightgray] (z) to (y); 
		  \draw[extended line, lightgray] (y) to (y'); 
		  	\draw[extended line, lightgray] (x) to (x'); 
	    \foreach \w in {z} 
	      {\foreach \i in {0,1,2,3}{\node (\w\i) at ([shift=({45 +\i*90:10})]\w) {};}} 
	    \foreach \w in {z}
	      {
	      \path[fill=gray!50, opacity =.3] (\w.center) -- (\w1.center) -- (\w0.center) -- (\w.center);
	      \path[fill=gray!50, opacity =.3] (\w.center) -- (\w3.center) -- (\w2.center) -- (\w.center);   
	      }
		  \end{scope}
	\end{tikzpicture}
	}
	\caption{Further choices for $p'(v)$ in $(G',p')$ when $x, y$ and $z$ are distinct and $yz\in E(T_1)$. In each case, we place $p'(y)$ below $p(y)$ to ensure the pair $(p'(y),p'(z))$ is well-positioned after $yz\in E(T_2)$ is deleted.
	}\label{fig:MainProof_SecondSetOfCases}
\end{figure}
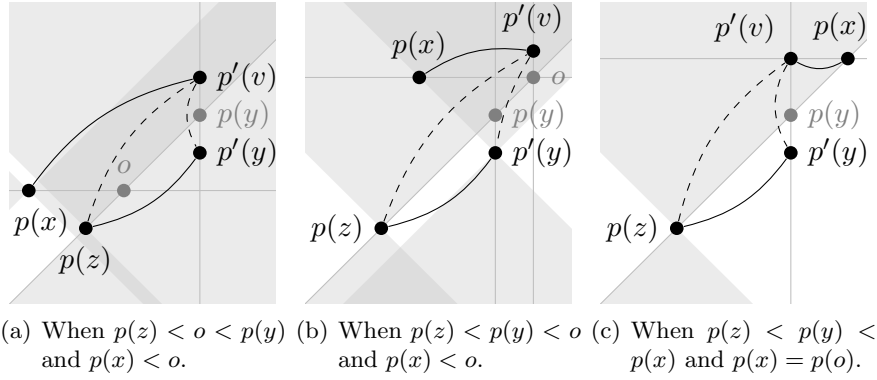
\end{case}

\begin{case} Suppose $x=z$. \\ 
In this case $v$ sends a double edge to $x$: one edge in $T_1'$ and the other in $T_2'$. If $xy\not\in E(T_1)$, then let $p'(w)=p(w)$ for all $w\in V(G)$. Place $p'(v)$ at the intersection of the lines $p'(x)+\lambda(1,1)$  and $p'(y)+\mu (a,1)$
where $-1<a<1$. Choose $a$ such that $p'(v)$ is not coincident with any other vertex of $(G',p')$. See Figure \ref{subfig:xyNotInT1}.

If $xy\in E(T_1)$ then, informally, place $p'(v)$ at $p(y)$ and then translate $p(y)$ downwards. Any vertex in $G$ which is joined to $y$ by a sequence of parallel edges in $G-e$  must also be translated downwards. Formally, define $p'$ by setting $p'(v)=p(y)$ and, for each $w\in V(G)$,
\[
p'(w)=
\begin{cases}
	p(w)+\epsilon(0,-1)	& \text{if } w\sim y, 
	\text{ and, }\\
	p(w)							& \text{otherwise,}
\end{cases}
\]
where $\epsilon>0$. See Figure \ref{subfig:xyInT1}

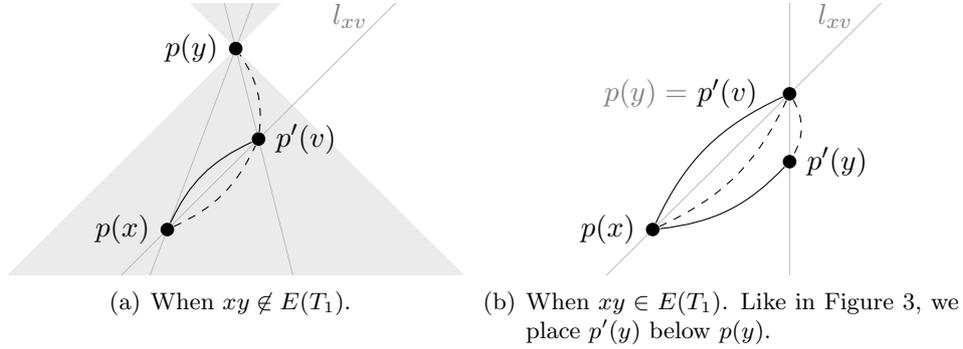
\begin{figure}[bht]
\centering
\subfigure[When $xy\not\in E(T_1)$.
]
  {\label{subfig:xyNotInT1}
	\begin{tikzpicture} [scale = .6]
		\node[label= \color{gray} $l_{xv}$] (l_xv) at (4,4) {}; 
	  \clip (-3.5,-1) rectangle (6.5,5); 
	  \node[vertex, label = left: $p(x)$] (x) at (0,0){}; 
	  \node[vertex, label = left: $p(y)$] (y) at (3-1.5,3+1){}; 

	  \node (x') at ([shift=({170:-.2})]x) {}; 
	  \node[vertex, label = right: $p'(v)$] (v') at (2,2) {}; 
	  \draw[dashed, bend right = 20] (x) to (v') (v') to (y); 
	  \draw[bend right = 20] (v') to (x);
	  \begin{scope}[on background layer]	  
	    \clip (-3.5,-1) rectangle (6.5,5); 
	    \draw[extended line, lightgray] (x) to (y); 
	    \draw[extended line, lightgray] (v') to (x); 
	    \draw[extended line, lightgray] (y) to (v'); 
	    \foreach \i in {0,1,2,3}{\node (y\i) at ([shift=({45 +\i*90:10})]y) {};} 
	    \path[fill=gray!50, opacity =.3] (y.center) -- (y1.center) -- (y0.center) -- (y.center);
	    \path[fill=gray!50, opacity =.3] (y.center) -- (y3.center) -- (y2.center) -- (y.center);
	  \end{scope}
	\end{tikzpicture}
  }  
\subfigure[When $xy\in E(T_1)$. Like in Figure \ref{fig:MainProof_SecondSetOfCases}, we place $p'(y)$ below $p(y)$.
]
  {\label{subfig:xyInT1}
	\begin{tikzpicture}[scale = .6]
		\node[label= \color{gray} $l_{xv}$] (l_xv) at (4,4) {}; 
	  \clip (-3.5,-1) rectangle (6.5,5); 
	  \node[vertex, label = left: $p(x)$] (x) at (0,0){}; 
	  \node[vertex, label = left: ${\color{gray}p(y)\eq }\ p'(v) \phantom{r}$] (v'y) at (3,3){}; 
	  \node (x') at ([shift=({170:-.2})]x) {}; 
	  \node[vertex, label = right: $p'(y)$] (y') at ([shift=({0,-1.5})]v'y){}; 
	  \draw[dashed, bend right = 20] (x) to (v'y);
	  \draw[dashed, bend left = 30] (v'y) to (y'); 
	  \draw[bend right = 20] (v'y) to (x);
	  \draw[bend left = 20] (y') to (x);	  
	  \begin{scope}[on background layer]	  
	    \clip (-3.5,-1) rectangle (6.5,5); 
	    \draw[extended line, lightgray] (x) to (v'y); 
	    \draw[extended line, lightgray] (v'y) to (y'); 
		  \end{scope}
	\end{tikzpicture}
  } 
\caption{Choices for $p'(v)$ in $(G',p')$ when $x=z$. Since both $T_1'$ and $T_2'$ contain an $xv$-edge, the line $l_{xv}$ through $p'(x)$ and $p'(v)$ has slope $1$. 
}
\end{figure}\label{fig:MainProof_x=z}
\end{case}

In each case, $\epsilon$ and $\delta$ can be chosen sufficiently small so that $p'(v)$ satisfies the required constraints and so that the points $\{p'(w):w\in V(G')\}$ are distinct. Thus $(G',p')$ is a realisation for $\G'$.
\end{proof}

We can now prove that every $2$-tree decomposition has a realisation in the plane.

\begin{thm}\label{thm:RealisationIn2Dimensions}
Let $\G=(G;T_1,T_2)$ be a $2$-tree decomposition. Then there exists a realisation for $\G$ in the plane.
\end{thm}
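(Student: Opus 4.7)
The plan is to prove this by induction on $|V(G)|$, invoking the inductive construction from Corollary \ref{cor:InductiveConstruction_TreesAndGraph} to reduce each $2$-tree decomposition to a smaller one, and then appealing to the two propositions already established.

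For the base case, the single-vertex decomposition $\K_1$ trivially admits a realisation: any placement of its unique vertex vacuously satisfies all constraints on the empty edge set. I would take this as $\G^{(1)}$ in the construction sequence.

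For the inductive step, I would assume that every $2$-tree decomposition on at most $n-1$ vertices has a realisation in the plane, and let $\G$ have $n$ vertices with $n\geq 2$. By Corollary \ref{cor:InductiveConstruction_TreesAndGraph}, there is a $2$-tree decomposition $\G^*$ with $n-1$ vertices from which $\G$ is obtained by a $2$-tree $j$-extension, where $j\in\{0,1\}$ (since Lemma \ref{lem:MinDegree} restricts $j$ to this range when $d=2$). The inductive hypothesis furnishes a realisation for $\G^*$. If $j=0$, Proposition \ref{prop:InductiveConstruction_Realisation0} produces a realisation of $\G$ directly. If $j=1$, Proposition \ref{prop:InductiveConstruction_Realisation1} applies; observe that its hypothesis requires a realisation not only of $\G^*$ itself but also of every $2$-tree decomposition with fewer vertices, which is exactly what the inductive hypothesis delivers (this is needed for the collinear sub-case in that proposition's proof, where one contracts parallel edges to obtain a strictly smaller decomposition, realises it, and lifts the result back).

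There is essentially no new difficulty at this level. All of the geometric work has been absorbed into Propositions \ref{prop:InductiveConstruction_Realisation0} and \ref{prop:InductiveConstruction_Realisation1}, and all of the combinatorial work into Corollary \ref{cor:InductiveConstruction_TreesAndGraph}. The theorem is simply the conclusion obtained by following the chain $\K_1= \G^{(1)}\to\G^{(2)}\to\cdots\to\G^{(n)}=\G$ and applying the appropriate realisation proposition at each step. The one subtlety worth flagging is that the induction must be formulated on vertex count (rather than on the length of the extension sequence alone), so that the strong-induction hypothesis required by Proposition \ref{prop:InductiveConstruction_Realisation1} is available; this is the only point where care is needed.
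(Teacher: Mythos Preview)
Your proposal is correct and follows essentially the same approach as the paper's proof, which also invokes Corollary~\ref{cor:InductiveConstruction_TreesAndGraph} for the construction sequence from $\K_1$ and then appeals to Propositions~\ref{prop:InductiveConstruction_Realisation0} and~\ref{prop:InductiveConstruction_Realisation1} at each step. Your explicit formulation as strong induction on $|V(G)|$ is in fact slightly more careful than the paper's terse version, since it makes transparent that the full hypothesis of Proposition~\ref{prop:InductiveConstruction_Realisation1} (realisations for \emph{all} smaller $2$-tree decompositions, not merely those appearing in the particular chain) is available when needed.
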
 

\begin{proof}
Corollary \ref{cor:InductiveConstruction_TreesAndGraph} gives a sequence of $2$-tree decompositions and $2$-tree $0$ and $1$-extensions which construct $\G$ from the base element $\K_1$. Note that a realisation of $\K_1$ is obtained   by placing the vertex of $\K_1$ anywhere in the plane. By Propositions \ref{prop:InductiveConstruction_Realisation0} and \ref{prop:InductiveConstruction_Realisation1}, there exists a realisation for every $2$-tree decomposition in this sequence, in particular, such a realisation exists for $\G$.
\end{proof}

As a corollary we obtain an alternative proof of the following result from \cite{KP_NonEuclideanNorms}.

\begin{cor}\label{cor:RealisationIn2Dimensions}
Let $G$ be a $(2,2)$-tight simple graph. Then there exists a placement $p$ such that $(G,p)$ is well-positioned and minimally rigid in $(\mathbb{R}^2, \|\cdot \|_\infty)$.
\end{cor}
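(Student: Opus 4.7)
The plan is to chain together the main results already established in the paper. Since $G$ is a $(2,2)$-tight simple graph, Theorem \ref{thm:TNW_SpanningTreeSparsity} (with $d=2$) guarantees that $G$ can be expressed as the edge-disjoint union of two spanning trees $T_1$ and $T_2$, i.e.\ $\G=(G;T_1,T_2)$ is a $2$-tree decomposition.

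Next, I would apply Theorem \ref{thm:RealisationIn2Dimensions} to this $2$-tree decomposition to obtain a realisation $(G,p)$ in the plane. By the definition of a realisation (in the multi-graph setting of Section \ref{sec:2D_Geometry}, which specialises to the simple-graph definition of Section \ref{Pre:Realisations} when $G$ has no parallel edges), the framework $(G,p)$ is well-positioned and its induced monochrome subgraphs under the framework colouring $\kappa_p$ are precisely the spanning trees $T_1$ and $T_2$.

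Finally, to upgrade the well-positioned property to minimal rigidity, I invoke Theorem \ref{thm:K_MinInfRigid_Simple}: since the monochrome subgraphs induced by $\kappa_p$ are spanning trees of $G$, the well-positioned framework $(G,p)$ is minimally rigid in $(\mathbb{R}^2,\|\cdot\|_\infty)$. This gives the required placement $p$.

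All the real work is already contained in Theorem \ref{thm:RealisationIn2Dimensions}, so there is no obstacle here — the corollary is just a direct assembly of Nash-Williams/Tutte (which converts the $(2,2)$-tightness hypothesis into a $2$-tree decomposition), the realisation theorem (the main geometric content of the paper), and the spanning-tree characterisation of minimal rigidity for the $\ell^\infty$ plane.
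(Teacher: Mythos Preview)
Your proposal is correct and follows exactly the same three-step chain as the paper's own proof: apply Theorem \ref{thm:TNW_SpanningTreeSparsity} to obtain a $2$-tree decomposition, apply Theorem \ref{thm:RealisationIn2Dimensions} to realise it, and then apply Theorem \ref{thm:K_MinInfRigid_Simple} to deduce minimal rigidity. Your additional remark that the realisation is well-positioned by definition is a helpful clarification but not a departure from the paper's argument.
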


\proof
By Theorem  \ref{thm:TNW_SpanningTreeSparsity}, $G$ admits a $2$-tree decomposition $\G=(G;T_1,T_2)$.
By Theorem \ref{thm:RealisationIn2Dimensions}, this $2$-tree decomposition has a realisation $(G,p)$ in the plane.
By Theorem \ref{thm:K_MinInfRigid_Simple}, this realisation is  minimally rigid  in $(\mathbb{R}^2, \|\cdot \|_\infty)$.
\endproof

\section{Symmetric $2$-tree decompositions}
\label{sec:Symmetry} 

In this section we adapt the methods of the previous pages to show that every {\em symmetric} $2$-tree decomposition, with no fixed edges, can be realised as a symmetric framework in the plane which is minimally rigid with respect to $\ell^\infty$ distance constraints. We focus on frameworks with reflectional symmetry through a coordinate axis.  
Our motivation comes from recent work (\cite{KS_IncidentalSymm}) which characterises the class of symmetric graphs which admit a symmetric and minimally rigid realisation in the plane, in terms of certain symmetric $2$-tree decompositions. The main results of this section make the task of constructing  examples of symmetric minimally rigid frameworks  significantly easier. These results also suggest that further adaptations  may be possible in other contexts, such as gain graph constructions for symmetric frameworks.  

By a   \emph{$\bZ_2$-symmetric multi-graph} we will mean a pair $(G,\theta)$ consisting of a multi-graph $G$, with no loops, and a non-trivial group homomorphism $\theta: \bZ_2 \to \text{Aut}(G)$. 
Let $\bZ_2=\langle s\rangle$. To simplify notation, we denote $\theta(s)$ by $s_\theta$ and for each edge $e=v_1v_2$ we write $s_\theta(e)=s_\theta(v_1) s_\theta(v_2)$. The \emph{vertex orbit} of $v\in V(G)$ is the pair of vertices 
$\{v,s_\theta(v)\}$ and the \emph{edge orbit} of $e\in E(G)$ is $\{e,s_\theta(e)\}$. We say that a vertex $v$ (respectively an edge $e$) is \emph{fixed} if $v=s_\theta(v)$ (respectively $e=s_\theta(e)$). 

\begin{defn}
A {\em symmetric $2$-tree decomposition} is a tuple $\G=(G;T_1,T_2;\theta)$ such that,
\begin{enumerate}[(a)]
\item $(G;T_1,T_2)$ is a $2$-tree decomposition,
\item $(G,\theta)$ is a $\bZ_2$-symmetric multi-graph, and
\item $s_\theta(T_1)=T_1$ and $s_\theta(T_2)=T_2$.
\end{enumerate}
\end{defn}

Denote by $\G_2^{sym}$ the set of all symmetric $2$-tree decompositions with no fixed edges. We formally include in $\G_2^{sym}$ the tuple $\K_1=(K_1;T_1,T_2;\theta)$ where $K_1$ is the graph with a single vertex $v_0$ and no edges, $T_1$ and $T_2$ have empty edge set, and $\theta:\bZ_2\to \Aut(K_1)$ is the trivial group homomorphism with $s_\theta(v_0)=v_0$. This tuple will form the base element of a construction scheme for $\G_2^{sym}$.

\begin{lem} \label{lem:W_fixedEandV}
Let $\G=(G;T_1,T_2;\theta)\in\G_2^{sym}$ with $\G\not=\K_1$. Then $s_\theta$ fixes exactly one vertex which has even degree at least 4.
\end{lem}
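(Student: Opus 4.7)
The plan is to prove the three assertions --- existence of a fixed vertex, its uniqueness, and its even degree at least $4$ --- in sequence, using only the definition of $\G_2^{sym}$ and elementary tree properties.

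For existence, I would first observe that since no edge of $G$ is fixed and $s_\theta$ preserves $T_1$, every $s_\theta$-orbit on $E(T_1)$ has size $2$, so $|E(T_1)|=|V(G)|-1$ is even, and hence $|V(G)|$ is odd. The orbits of $s_\theta$ on $V(G)$ have sizes $1$ or $2$, so the number of fixed vertices has the same parity as $|V(G)|$; in particular it is positive.

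For uniqueness, suppose $u\ne v$ are both fixed, and consider the unique $uv$-path $P=x_0,x_1,\dots,x_k$ in $T_1$. Since $s_\theta$ is an automorphism of $T_1$ fixing $u$ and $v$, it restricts to an automorphism of $P$ fixing both endpoints; the only such path automorphism is the identity, so every $x_i$ is fixed. Because $T_1$ is a simple tree, there is a unique edge of $T_1$ joining $x_0$ to $x_1$, and this edge is then fixed by $s_\theta$, contradicting the hypothesis that $G$ has no fixed edges. Hence at most one fixed vertex exists; combined with existence, there is exactly one, which I will call $v^*$.

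For the degree claim, any edge $v^*u\in E(T_i)$ maps under $s_\theta$ to an edge joining $v^*$ and $s_\theta(u)$; since $u\ne v^*$ implies $u$ is not fixed, $s_\theta(u)\ne u$, so $v^*u\ne v^*s_\theta(u)$. Hence the edges of $T_i$ at $v^*$ fall into $s_\theta$-orbits of size $2$, so $d_{T_i}(v^*)$ is even. Since $\G\ne\K_1$ gives $|V(G)|\ge 2$ and $T_i$ is spanning, $d_{T_i}(v^*)\ge 1$, and therefore $d_{T_i}(v^*)\ge 2$. Summing, $d_G(v^*)=d_{T_1}(v^*)+d_{T_2}(v^*)\ge 4$ and is even.

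The main subtle point is the path argument in the uniqueness step, which simultaneously relies on $T_1$ being a tree (to get a unique $uv$-path, hence preserved by $s_\theta$, with only two possible restrictions) and on $T_1$ being simple (so that a single edge joins $x_0$ to $x_1$ in $T_1$, forcing it to be fixed once both endpoints are). Everything else reduces to orbit counting.
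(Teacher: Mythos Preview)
Your argument is correct and complete. The paper does not actually prove this lemma: it cites \cite[Lemma~3]{KS_IncidentalSymm} for the simple-graph case and asserts that the proof extends to multi-graphs. Your version is a self-contained treatment that handles the multi-graph case directly, and the crucial observation you flag---that each $T_i$, being a tree, is automatically simple, so an edge of $T_i$ whose endpoints are both fixed must itself be fixed---is precisely the point where the multi-graph generality could have caused trouble. The parity count for existence, the unique-path argument for uniqueness, and the orbit pairing at $v^*$ for the degree bound are all sound; note in particular that your degree argument correctly relies on the uniqueness already established (so that $u\ne v^*$ forces $s_\theta(u)\ne u$).
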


\begin{proof}
The case where $G$ is a simple graph is proved in \cite[Lemma 3]{KS_IncidentalSymm} and this proof extends to the multi-graph case. 
\end{proof}

\subsection{Multi-graph construction scheme for $\G_2^{sym}$}

The following two graph moves were applied in the context of simple graphs, and with $d=2$, in \cite{KS_IncidentalSymm} (where they were referred to as $\bZ_2$-symmetric $1$-extensions and $\bZ_2$-symmetric $2$-extensions). Here we will first extend the moves to multi-graphs, allowing $d\in\{1,2\}$, and then introduce corresponding moves for symmetric $2$-tree decompositions.  

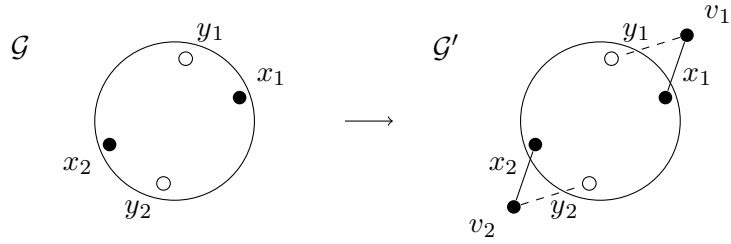
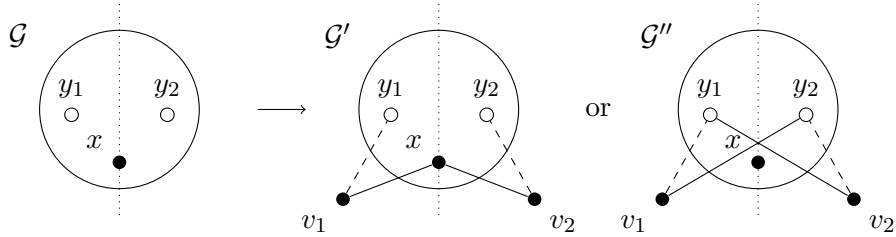
\begin{figure}
\centering
\subfigure[A symmetric $2$-tree decomposition $\G'$  formed from a symmetric $2$-tree decomposition $\G$ by a symmetric 2-tree 0-extension.] 
	{\label{subfig:0ext_halfturn}
	\begin{tikzpicture}[scale = 0.7]
	\foreach \xoffset/\name in {0/$\G$,8/$\G'$}
		{
		\begin{scope}[shift={(\xoffset,0)}]		
			\draw (0,0)	circle (1.5cm); 
			\draw (-2.9,1.4) node {\name};	
			\foreach \rotnum/\rotname in {0/$_1$, 180/$_2$}		
			{
					\node[vertex, label = \rotnum+20 : $x$\rotname] (x) at (20+\rotnum : 1.3cm) {};
					\node[emptyvertex, label = \rotnum+85 : $y$\rotname] (y) at (80+\rotnum : 1.2cm) {};
					\ifthenelse {\xoffset=8}{		
							\node[vertex, label = \rotnum +20 : $v$\rotname] (v) at (45+\rotnum : 2.3cm) {};
							\ifthenelse{\rotnum=2 \OR \rotnum=4}
							{	
								\draw[dashed] (v)--(x);
								\draw (v)--(y);
							}{
								\draw (v)--(x);
								\draw[dashed] (v)--(y);
							}
					}{}									
			}
			\ifthenelse{\xoffset = 0}{\draw[->]	(3.2,0) -- (4.1,0);}{}
		\end{scope}	
		}
	\end{tikzpicture}
	}
\subfigure[Two symmetric $2$-tree decompositions, $\G'$ and $\G''$, each formed by a  symmetric 2-tree 0-extension on the symmetric $2$-tree decomposition $\G$.]
	{\label{subfig:0ext_mirror}
	\begin{tikzpicture}[scale = 0.7]
	\foreach \xoffset/\name in {0/$\G$,6/$\G'$,12/$\G''$}
		{
		\begin{scope}[shift={(\xoffset,0)}]		
			\draw (0,0)	circle (1.5cm); 
			\draw[dotted] (0,-2) to (0,2); 
			\draw (-1.9,1.4) node {\name};	
			\node[vertex, label = 150 :$x$] 	(x) at (0  ,-1) {}; 
			\node[emptyvertex, label = above :$y_2$] 	 (y)  at ( 0.9,-0.1) {};
			\node[emptyvertex, label = above :$y_1$]  (y') at (-0.9,-0.1) {}; 
			\ifthenelse{\xoffset > 5}{
					\node[vertex, label = -90 + 40 :$v_2$]   (v) at (1.8  ,-1.7) {};
					\node[vertex, label = -90 + -40 :$v_1$] (v') at (-1.8  ,-1.7) {};
			}{}
			\ifthenelse {\xoffset=6}{			
					\draw (v)--(x)--(v');
					\draw[dashed] (v)--(y) (v')--(y');	
			}{}						
			\ifthenelse {\xoffset=12}{
					\draw[dashed] (v)--(y) (v')--(y');
					\draw (v)--(y') (v')--(y);
			}{}		
	    \ifthenelse{\xoffset = 0}{\draw[->]	(2.6,0) -- (3.5,0);}{}
			\ifthenelse{\xoffset = 6}{\node[label = center :\textnormal{or}] (or) at (3,0) {};}{}			
		\end{scope}	
		}
	\end{tikzpicture}
	}
\caption{Symmetric 2-tree 0-extensions under half-turn \subref{subfig:0ext_halfturn} and single mirror \subref{subfig:0ext_mirror} symmetry.}
\label{fig:0ext} 
\end{figure}

\begin{defn}
A $\bZ_2$-symmetric multi-graph $(G',\theta')$ is said to be obtained from a $\bZ_2$-symmetric multi-graph $(G,\theta)$ by a \emph{symmetric $d$-dimensional 0-extension}  if,
\begin{enumerate}[(a)]
\item $V(G')=V(G)\cup \{v,s_{\theta'}(v)\}$ where $v,s_{\theta'}(v)\notin V(G)$ and $v\not=s_{\theta'}(v)$,
\item $s_{\theta'}\vert_{V(G)}=s_\theta$,
\item $E(G')=E(G)+\{vv_i,s_{\theta'}(vv_i):i=1,\ldots, d\}$ for some $v_1,\ldots,v_d\in V(G)$ not necessarily distinct.
\end{enumerate}
\end{defn}
See Figure \ref{fig:0ext} for examples of such a move when $d=2$.

\begin{defn}
A $\bZ_2$-symmetric multi-graph $(G',\theta')$ is said to be obtained from a $\bZ_2$-symmetric multi-graph $(G,\theta)$ by a \emph{symmetric $d$-dimensional 1-extension}  if,
\begin{enumerate}[(a)]
\item $V(G')=V(G)\cup \{v,s_{\theta'}(v)\}$ where $v,s_{\theta'}(v)\notin V(G)$ and $v\not=s_{\theta'}(v)$,
\item $s_{\theta'}\vert_{V(G)}=s_\theta$,
\item there exist $d+1$ vertices $v_1,\ldots,v_{d+1}\in V(G)$, with $e=v_1v_2\in E(G)$ but which are otherwise not necessarily distinct, such that 
$E(G')=E(G)-\{e,s_{\theta}(e)\}+\{vv_i,s_{\theta'}(vv_i):i=1,\ldots,d+1\}$.
\end{enumerate}
\end{defn}
See Figure \ref{fig:1ext} for examples when $d=2$.

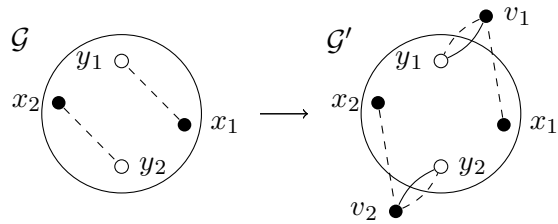
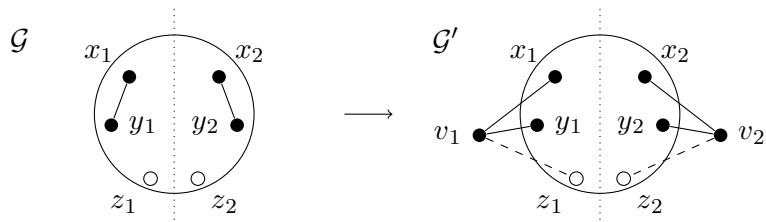
\begin{figure}
\centering
\subfigure[Half-turn symmetry] 
	{\label{subfig:1ext_halfturn}
	\begin{tikzpicture}[scale = 0.7]
	\foreach \xoffset/\name in {0/$\G$,6/$\G'$}
		{
		\begin{scope}[shift={(\xoffset,0)}]		
			\draw (0,0)	circle (1.5cm); 
			\draw (-1.9,1.4) node {\name};	
			\foreach \rotnum/\rotname in {0/$_1$, 180/$_2$}		
			{
				\node[vertex, label = \rotnum :\phantom{.}$x$\rotname] (x) at (-10+\rotnum : 1.2cm) {};
	  		\node[emptyvertex, label = \rotnum+180 :$y$\rotname] (y) at (90+\rotnum : 1cm) {};
				\ifthenelse {\xoffset=0}{
					\draw[->]	(2.6,0) -- (3.5,0);
					\draw[dashed] (x)--(y);
				}{
					\node[vertex, label = \rotnum :$v$\rotname] (v) at ([shift={(\rotnum+45:1.2)}]y) {}; 
					\draw[dashed] (x)--(v);
					\draw[bend left= 20] (v) to (y);
					\draw[bend right = 20, dashed] (v) to (y);
				}							
			}
		\end{scope}	
		}
	\end{tikzpicture}
	}
\subfigure[Single mirror symmetry]
  {\label{subfig:1ext_mirror}
	\begin{tikzpicture}[scale = 0.7]
	\foreach \xoffset/\name in {0/$\G$,8/$\G'$}
	{
		\begin{scope}[shift={(\xoffset,0)}]		
			\draw (0,0)	circle (1.5cm); 
			\draw[dotted] (0,-2) to (0,2); 
			\draw (-2.9,1.4) node {\name};	
			\foreach \reflnum/\reflname in {-1/$_2$, 1/$_1$} 
			{
				\node[vertex, label = 90+\reflnum*+50 :$x$\reflname] (x) at (90+50*\reflnum : 1.1cm) {};
				\node[vertex, label = 90+\reflnum*-90 :$y$\reflname] (y) at (90+100*\reflnum : 1.2cm) {};
			  \node[emptyvertex, label = 90+\reflnum*160 :$z$\reflname] (z) at (90+160*\reflnum : 1.3cm) {};
				\ifthenelse {\xoffset=0}{							
				  \draw (x) to (y);
				}{
					\node[vertex, label = 90+\reflnum*90 :$v$\reflname] (v) at (90+100*\reflnum : 2.3cm) {};					
						\draw (x) to (v) to (y);
						\draw[dashed] (v) to (z);
				}
			}
	    \ifthenelse{\xoffset = 0}{\draw[->]	(3.2,0) -- (4.1,0);}{}		
		\end{scope}	
	}
	\end{tikzpicture}
	}
\caption{Symmetric 2-tree 1-extensions. In each case, the symmetric 2-tree decomposition $\G'$ is obtained from the symmetric 2-tree decomposition $\G$ by a symmetric 2-tree 1-extension.}
\label{fig:1ext}
\end{figure}

We now adapt the above symmetric graph moves to incorporate symmetric $2$-tree decompositions. Again see Figures \ref{fig:0ext} and \ref{fig:1ext} for illustrations of these moves.

\begin{defn}
A symmetric $2$-tree decomposition $\G'=(G';T_1',T_2';\theta')$ is said to be obtained from a symmetric $2$-tree decomposition $\G=(G;T_1,T_2;\theta)$ by a {\em symmetric $2$-tree $j$-extension}, where $j\in \{0,1\}$,
if 
\begin{enumerate}
	\item $(G',\theta')$ is obtained from $(G,\theta)$ by a symmetric $2$-dimensional $j$-extension, and,
	\item for $i\in\{1,2\}$, $(T_i',\theta')$ is obtained from $(T_i,\theta)$ by a symmetric $1$-dimensional $k_i$-extension, for some $k_i\in\{0,1\}$ where $k_1+k_2=j$. 
\end{enumerate}
\end{defn}
Reconsider Figures \ref{fig:0ext} and \ref{fig:1ext}, this time noting the edge colourings.

We now prove the existence of a construction scheme for symmetric $2$-tree decompositions in $\G_2^{sym}$ which uses symmetric $2$-tree $0$  and $1$-extensions.

\begin{thm}\label{thm:W_GraphConstruction}
Let $\G'=(G';T_1',T_2';\theta')\in\G_2^{sym}$ with $\G'\not=\K_1$. Then there exists $\G=(G;T_1,T_2;\theta)\in\G_2^{sym}$ such that $\G'$ is obtained from $\G$ by either a symmetric 2-tree 0-extension or a symmetric 2-tree 1-extension.
\end{thm}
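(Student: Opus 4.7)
The plan is to adapt the argument of Proposition \ref{prop:InductiveConstruction_UnionAndTrees} while preserving the $\bZ_2$-action and the no-fixed-edge condition. By Lemma \ref{lem:MinDegree}, $\delta(G')\in\{2,3\}$, and by Lemma \ref{lem:W_fixedEandV}, the unique fixed vertex has even degree at least $4$. So there is a non-fixed vertex $v$ of degree $2$ or $3$, and I would reduce at $v$, splitting into two cases on $d_{G'}(v)$.

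If $d_{G'}(v)=2$, then $d_{T_1'}(v)=d_{T_2'}(v)=1$ and both $v$ and $s_{\theta'}(v)$ are leaves of each spanning tree. First I would observe that $vs_{\theta'}(v)\notin E(G')$: for any putative $T_i'$-edge between $v$ and $s_{\theta'}(v)$, applying $s_{\theta'}$ and using $s_{\theta'}(T_i')=T_i'$ produces either a single fixed edge (forbidden) or two parallel edges in $T_i'$ forming a $2$-cycle. Hence $v$ and $s_{\theta'}(v)$ have disjoint incident edge sets, and deleting them realises a symmetric $2$-tree $0$-reduction. Since no edges are added, no fixed edge is introduced and the output lies in $\G_2^{sym}$.

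If $d_{G'}(v)=3$, then WLOG $d_{T_1'}(v)=1$ and $d_{T_2'}(v)=2$, with $N_{T_1'}(v)=\{x\}$ and $N_{T_2'}(v)=\{y,z\}$. The plan is to perform the symmetric $2$-tree $1$-reduction that deletes $v$, $s_{\theta'}(v)$, and their incident edges, and adds the orbit $\{yz,\,s_{\theta'}(y)s_{\theta'}(z)\}$ as new $T_2$-edges. This orbit is not $s_{\theta'}$-fixed, since $s_{\theta'}(y)=z$ would close a $4$-cycle $vy,\,ys_{\theta'}(v),\,s_{\theta'}(v)z,\,zv$ in $T_2'$. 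By the argument of the previous case, $vs_{\theta'}(v)\notin E(G')$, so $T_2'-\{v,s_{\theta'}(v)\}$ is a forest with exactly $3$ components.

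The main obstacle is to show that these three components are reconnected into a tree by the added orbit; the danger is that both added edges run between the same pair of components, producing a cycle. To rule this out, I would analyse the unique $T_2'$-path $P$ from $v$ to $s_{\theta'}(v)$: since $P$ is $s_{\theta'}$-invariant and no edge of $G'$ is $s_{\theta'}$-fixed, the midpoint of $P$ must be a vertex, which must be the unique fixed vertex $v_0$. Relabelling so that $y$ is the neighbour of $v$ on $P$, the resulting palindromic structure forces $s_{\theta'}(y)$ to be the neighbour of $s_{\theta'}(v)$ on $P$. Consequently, in $T_2'-\{v,s_{\theta'}(v)\}$, the vertices $y$ and $s_{\theta'}(y)$ lie together in a single $s_{\theta'}$-fixed component, while $z$ and $s_{\theta'}(z)$ lie in the two remaining components, which are swapped by $s_{\theta'}$. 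The added orbit then joins the fixed component to each swapped component, producing a tree $T_2$. Since $T_1'-\{v,s_{\theta'}(v)\}$ is also a tree (two leaves removed), the result $\G=(G;T_1,T_2;\theta)$ lies in $\G_2^{sym}$, and $\G'$ arises from $\G$ by a symmetric $2$-tree $1$-extension.
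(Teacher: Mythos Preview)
Your argument is correct and follows the paper's overall strategy (locate a non-fixed vertex of degree $2$ or $3$ via Lemmas \ref{lem:MinDegree} and \ref{lem:W_fixedEandV}, then reduce symmetrically). The degree-$2$ case is essentially identical to the paper's, with the added virtue that you justify $vs_{\theta'}(v)\notin E(G')$ explicitly. In the degree-$3$ case your route genuinely diverges: the paper simply observes that $\{u_1,u_3\}\neq\{s_{\theta'}(u_1),s_{\theta'}(u_3)\}$ (via the same $4$-cycle contradiction you use) and then performs two successive $1$-dimensional Henneberg reductions on $T_1'$, each of which preserves tree-ness by Lemma \ref{lem:HennebergMovesOnTrees}. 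You instead analyse $T_2'-\{v,s_{\theta'}(v)\}$ directly, using the palindromic structure of the $v$--$s_{\theta'}(v)$ path (forced to pass through the unique fixed vertex, since a fixed midpoint edge is forbidden) to identify the three components as one $s_{\theta'}$-fixed piece containing $y,s_{\theta'}(y)$ and two swapped pieces containing $z,s_{\theta'}(z)$; the added orbit then visibly reconnects them. The paper's approach is shorter and reuses Lemma \ref{lem:HennebergMovesOnTrees} cleanly; yours gives more structural information and makes the no-fixed-edge property of the output transparent. One small remark: your sentence ``this orbit is not $s_{\theta'}$-fixed, since $s_{\theta'}(y)=z$ would close a $4$-cycle'' only addresses the case where $s_{\theta'}$ swaps $y$ and $z$, not the case where both are fixed --- but the latter is excluded by uniqueness of the fixed vertex, and in any event your subsequent component analysis rules out $\{y,z\}=\{s_{\theta'}(y),s_{\theta'}(z)\}$ independently.
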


\begin{proof}
By Lemma \ref{lem:MinDegree}, $G'$ has a vertex $v$ of degree 2 or 3, and by Lemma \ref{lem:W_fixedEandV}, this vertex is not fixed. Hence the vertex orbit $\{v, s_{\theta'}(v)\}$ contains two distinct vertices with $d_{G'}(v) = d_{G'}(s_{\theta'}(v))$. Since $G'$ is the edge-disjoint union of spanning trees $T_1'$ and $T_2'$, $v$ is incident to at least one edge $e_i\in E(T_i')$ from each of these trees. Since $d_{G'}(v) \leq 3$, either $v$ is incident to one other edge, $e_3$, or $v$ is incident to no other edges.

\begin{claim}
If $v$ is incident to no other edges, then $\G'$ is formed from a symmetric $2$-tree decomposition $\G$ by a symmetric $2$-tree 0-extension.
\end{claim}

\begin{proof}
In this case $N_{G'}(v) =\{u_1,u_2\}$, where $u_1$ and $u_2$ need not be distinct. Since $d_{G'}(v) = 2$, and $s_{\theta'}(v)\not\in N_{G'}(v)$, we can remove both $v$ and $s_{\theta'}(v)$ from $G'$ by separate $2$-dimensional $0$-reductions to obtain a $\bZ_2$-symmetric multi-graph $(G,\theta)$ with $s_{\theta}=s_{\theta'}\vert_{V(G)}$. Let $T_i = T_i' - {\{v, s_{\theta'}(v)\}}$ and note that $s_\theta(T_i)=T_i$. Thus  $\G=(G;T_1,T_2;\theta)\in \G_2^{sym}$. Further  $\G'$ may be obtained from $\G$ by a symmetric $2$-tree 0-extension. 
\end{proof}

\begin{claim}
If $v$ is incident to a third edge $e_3$, then $\G'$ is formed from  a symmetric $2$-tree decomposition $\G$ by a symmetric $2$-tree 1-extension.
\end{claim}

\begin{proof}
Without loss of generality, suppose $e_3\in E(T_1')$. For $i\in\{1,2,3\}$, none of the edges $e_i$  are fixed, so they each terminate at some $u_i\in V(G')-{\{v, s_{\theta'}(v)\}}$. Hence each edge $s_{\theta'}(e_i)$ incident to $s_{\theta'}(v)$ terminates at $s_{\theta'}(u_i)\in V(G')-{\{v, s_{\theta'}(v)\}}$.

Since $e_1,e_3\in E(T_1')$ and $T_1'$ is a tree, $u_1u_3\not\in E(T_1')$, and so, since $T_1'$ is symmetric under $\theta'$, $s_{\theta'}(u_1)s_{\theta'}(u_3)\not\in E(T_1')$ either. If $\{u_1, u_3\} = \{s_{\theta'}(u_1), s_{\theta'}(u_3)\}$ then $v, u_1, s_{\theta'}(v), u_3, v$ is a cycle in $T_1'$, which is a contradiction. Hence $\{u_1, u_3\}\neq\{s_{\theta'}(u_1), s_{\theta'}(u_3)\}$ and so we can perform a 2-dimensional 1-reduction at $v$ which adds the edge $u_1 u_3$, followed by a 2-dimensional 1-reduction at $s_{\theta'}(v)$ which adds the edge $s_{\theta'}(u_1)s_{\theta'}(u_3)$  to form the graph $G$. 

Let $\G=(G;T_1,T_2;\theta)$ be the symmetric  $2$-tree decomposition with $G = G'-{\{v, s_{\theta'}(v)\}} + { \{u_1u_3, s_{\theta'}(u_1u_3)\}}$, $T_1 = T_1' - {\{v, s_{\theta'}(v)\}} + { \{u_1u_3, s_{\theta'}(u_1u_3)\}}$, $T_2 = T_2' - {\{v, s_{\theta'}(v)\}}$ and $s_\theta=s_{\theta'}\vert_{V(G)}$. Then $s_\theta(T_i)=T_i$ and so  $\G\in \G_2^{sym}$. Further $T_1$ was obtained from $T_1'$ by a pair of 1-dimensional 1-reductions, and $T_2$ was formed from $T_2'$ by a pair of 1-dimensional 0-reductions. Thus we can reconstruct $\G'$ from $\G$ by a symmetric  $2$-tree 1-extension.
\end{proof}
\end{proof}

This result implies the inductive constructions sought:

\begin{cor}\label{cor:sym_inductive_W}
Let $\G=(G;T_1,T_2;\theta)\in\G_2^{sym}$. 
Then, there exists a sequence of symmetric $2$-tree decompositions in $\G_2^{sym}$,
\[
\K_1= \G^{(1)} \rightarrow \G^{(2)} \rightarrow \cdots \rightarrow \G^{(n)}=\G
\]
such that for all $2\leq i\leq n$, $\G^{(i)}$ is obtained from $\G^{(i-1)}$ by a symmetric $2$-tree $j$-extension, for some $j\in\{0,1\}$.
\end{cor}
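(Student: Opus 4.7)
The plan is to prove this by induction on the number of vertices of $G$, with Theorem \ref{thm:W_GraphConstruction} supplying the inductive step in essentially a single stroke. This mirrors the unfolding of Corollary \ref{cor:InductiveConstruction_TreesAndGraph} from Proposition \ref{prop:InductiveConstruction_UnionAndTrees} in the non-symmetric setting.

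For the base case, observe that if $\G=\K_1$ then the trivial one-term sequence $\K_1=\G^{(1)}=\G$ satisfies the conclusion vacuously. For the inductive step, suppose $\G=(G;T_1,T_2;\theta)\in\G_2^{sym}$ with $\G\neq\K_1$ and assume the result holds for every symmetric $2$-tree decomposition in $\G_2^{sym}$ with strictly fewer vertices than $\G$. By Theorem \ref{thm:W_GraphConstruction}, there exists $\widetilde{\G}=(\widetilde{G};\widetilde{T_1},\widetilde{T_2};\widetilde{\theta})\in\G_2^{sym}$ such that $\G$ is obtained from $\widetilde{\G}$ by a symmetric $2$-tree $j$-extension, for some $j\in\{0,1\}$. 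By inspection of the definitions of both symmetric $2$-dimensional $0$-extensions and symmetric $2$-dimensional $1$-extensions, we have $|V(G)|=|V(\widetilde{G})|+2$, so in particular $|V(\widetilde{G})|<|V(G)|$, and the inductive hypothesis applies to $\widetilde{\G}$.

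Hence there exists a sequence
\[
\K_1=\G^{(1)} \rightarrow \G^{(2)} \rightarrow \cdots \rightarrow \G^{(n-1)}=\widetilde{\G}
\]
of symmetric $2$-tree decompositions in $\G_2^{sym}$ such that, for $2\leq i\leq n-1$, $\G^{(i)}$ is obtained from $\G^{(i-1)}$ by a symmetric $2$-tree $j$-extension, for some $j\in\{0,1\}$. Appending $\G^{(n)}=\G$ to the end of this sequence yields the required construction sequence for $\G$.

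There is essentially no obstacle here: the hard combinatorial content (locating a removable vertex orbit and verifying that the reverse move is a symmetric $2$-tree $0$- or $1$-extension) has already been discharged in Theorem \ref{thm:W_GraphConstruction}. The only points to double-check are that $\widetilde{\G}$ genuinely lies in $\G_2^{sym}$ (which is part of the conclusion of Theorem \ref{thm:W_GraphConstruction}, including the absence of fixed edges) and that the induction terminates, which it does because each reduction strictly decreases $|V(G)|$ by $2$ and the base element $\K_1$ is the unique member of $\G_2^{sym}$ with a single vertex.
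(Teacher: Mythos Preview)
Your proof is correct and matches the paper's approach: the paper does not give an explicit proof of this corollary, simply noting that Theorem~\ref{thm:W_GraphConstruction} ``implies the inductive constructions sought,'' and your argument spells out precisely the intended induction on $|V(G)|$. The parallel you draw with Corollary~\ref{cor:InductiveConstruction_TreesAndGraph} and Proposition~\ref{prop:InductiveConstruction_UnionAndTrees} is exactly right.
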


\subsection{Realisations with $\mathcal{C}_s$-symmetry}
We shall now show how to construct a symmetric realisation for any symmetric $2$-tree decomposition from the class $\G_2^{sym}$. {We prove this explicitly for realisations under reflection symmetry. The argument for half-turn symmetry is similar.}

 can construct a realisation with either reflectional symmetry or rotational symmetry.

A {\em symmetric placement} of a $\bZ_2$-symmetric multi-graph $(G,\theta)$ in the plane is a pair $(p,\tau)$ consisting of an injective map $p:V(G)\to \mathbb{R}^2$ and a representation $\tau:\bZ_2\to \GL(\mathbb{R}^2)$ such that $\tau(s)(p(v)) = p(s_\theta(v))$ for all $v\in V(G)$. 
If $\tau(s)$ is a reflection in a coordinate axis then we refer to the pair $(p,\tau)$ as a {\em $\C_s$-placement} of $(G,\theta)$.
If  $\tau(s)$ is a half-turn rotation about the origin then we refer to $(p,\tau)$ as a {\em $\C_2$-placement} of $(G,\theta)$.

A {\em $\C_s$-realisation} (respectively, {\em $\C_2$-realisation}) for a symmetric $2$-tree decomposition $\G=(G;T_1,T_2;\theta)$ in the plane is a $\C_s$-placement  (respectively, {\em $\C_2$-placement})  $(p,\tau)$ of $(G,\theta)$ with the property that $(G,p)$  is a realisation for the  $2$-tree decomposition $(G;T_1,T_2)$. 

\begin{prop}\label{prop:InductiveConstruction_Realisation0_Sym}
Let $\G=(G;T_1,T_2;\theta)$ and $\G'=(G';T_1',T_2';\theta')$ be a pair of symmetric $2$-tree decompositions and suppose $\G'$ is obtained by applying a symmetric $2$-tree $0$-extension to $\G$.

If $\G$ has a $\C_s$-realisation $(p,\tau)$ in the plane then $\G'$ has a $\C_s$-realisation $(p',\tau)$ in the plane with the property that $p'(w)=p(w)$ for all $w\in V(G)$.
\end{prop}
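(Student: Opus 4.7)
The plan is to build the new placement $p'$ by first positioning $v$ using the non-symmetric construction from Proposition \ref{prop:InductiveConstruction_Realisation0} and then setting $p'(s_{\theta'}(v)) := \tau(s)(p'(v))$, with $p'(w) = p(w)$ for all $w \in V(G)$. Observe first that adjoining just the vertex $v$ with its two incident edges is itself a (non-symmetric) $2$-tree $0$-extension of $(G;T_1,T_2)$, so Proposition \ref{prop:InductiveConstruction_Realisation0} produces a position $p'(v) \in \bR^2$ making both pairs $\{p'(v),p'(v_i)\}$ well-positioned with the prescribed framework colours, where $v_1,v_2\in V(G)$ (not necessarily distinct) are the endpoints of the two new edges at $v$.

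The key new verification is that the two image edges at $s_{\theta'}(v)$ are also well-positioned with the matching framework colours. Since $\tau(s)$ is a reflection in a coordinate axis, it is a linear $\ell^\infty$-isometry sending each basis vector $e_i$ to $\pm e_i$. Hence if $\|p'(v)-p'(v_i)\|_\infty = |(p'(v)-p'(v_i))\cdot e_i|$ then
\[
\|p'(s_{\theta'}(v))-p'(s_{\theta'}(v_i))\|_\infty = |(p'(s_{\theta'}(v))-p'(s_{\theta'}(v_i)))\cdot e_i|,
\]
and the hypothesis $s_\theta(T_i) = T_i$ ensures the image edge $s_{\theta'}(v)s_{\theta'}(v_i)$ lies in $T_i'$ with framework colour $i$. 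By construction $\tau(s)(p'(w)) = p'(s_{\theta'}(w))$ for every $w \in V(G')$, so $(p',\tau)$ is a $\C_s$-placement and hence a $\C_s$-realisation of $\G'$.

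The step I expect to require the most care is ensuring that $p'$ is injective and that $p'(v) \neq p'(s_{\theta'}(v))$. The key point is that the proof of Proposition \ref{prop:InductiveConstruction_Realisation0} supplies not a unique position for $p'(v)$ but an open set $U$ of admissible positions: a two-dimensional neighbourhood of an axis-aligned intersection when $v_1 \neq v_2$, or an open arc of one of the two diagonals $p(v_1)+\lambda(1,\pm 1)$ when $v_1 = v_2$. One must choose $p'(v) \in U$ avoiding the finite set $\{p(w), \tau(s)(p(w)) : w \in V(G)\}$ together with the fixed-point line of $\tau(s)$. Since each candidate diagonal has slope $\pm 1$, it cannot coincide with a coordinate axis and so meets the reflection line in at most one point; in either case only finitely many points and at most one line are removed from $U$, leaving a valid choice for $p'(v)$.
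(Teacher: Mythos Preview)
Your proposal is correct and follows essentially the same approach as the paper: form the intermediate non-symmetric $2$-tree decomposition by adjoining only $v$, invoke Proposition~\ref{prop:InductiveConstruction_Realisation0} to place $v$, and then set $p'(s_{\theta'}(v)) = \tau(s)(p'(v))$. Your treatment is in fact more detailed than the paper's, which simply asserts that $(p',\tau)$ is a $\C_s$-realisation; you supply the verification that the reflected edges inherit the correct framework colours and that injectivity (including $p'(v)\neq p'(s_{\theta'}(v))$) can be arranged by exploiting the freedom in the placement of $v$.
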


\begin{proof}
Let $(p,\tau)$ be a $\C_s$-realisation for $\G$ in the plane.
Suppose the symmetric $2$-tree $0$-extension which forms $\G'$ from $\G$ adjoins the vertices $v$ and $s_{\theta'}(v)$ to $G$. 
Let $\G''$ be the intermediate (and non-symmetric) $2$-tree decomposition obtained by deleting $s_{\theta'}(v)$ and its incident edges from $G'$.
Note that $\G''$ is obtained from $\G$ by a (non-symmetric)
$2$-tree $0$-extension. 
By Proposition \ref{prop:InductiveConstruction_Realisation0}, 
there exists a realisation $p''$ for $\G''$ with the property that $p''(w) = p(w)$ for all $w\in V(G)$. 
Define $p'(w)=p(w)$ for all $w\in V(G)$, $p'(v)=p''(v)$ and 
$p'(s_{\theta'}(v)) = \tau(s)(p'(v))$.	
The resulting pair $(p',\tau)$ is a $\C_s$-realisation for $\G'$.
\end{proof}

To complete the inductive construction we now consider geometric placements for symmetric $2$-tree $1$-extensions.

\begin{prop}\label{prop:InductiveConstruction_Realisation1_Sym}
Let $\G=(G;T_1,T_2;\theta)$ and $\G'=(G';T_1',T_2';\theta')$ be a pair of symmetric $2$-tree decompositions and suppose $\G'$ is obtained by applying a symmetric $2$-tree $1$-extension to $\G$.

If $\G$, and every symmetric $2$-tree decomposition with fewer vertices than $\G$, has a $\C_s$-realisation in the plane then $\G'$ has a $\C_s$-realisation in the plane.
\end{prop}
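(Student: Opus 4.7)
The plan is to follow the template of Proposition \ref{prop:InductiveConstruction_Realisation1}: construct a placement for the new vertex $v$ using the case analysis of that proof and then extend symmetrically by setting $p'(s_{\theta'}(v)) = \tau(s)(p'(v))$. Without loss of generality take $\tau(s)$ to be reflection in the $x$-axis, so that $\tau(s)(a,b) = (a,-b)$.

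The observation that enables this reduction is that reflection in a coordinate axis preserves framework colours in $(\bR^2, \|\cdot\|_\infty)$: the absolute value of each coordinate of a difference vector is unchanged by $\tau(s)$, so a pair well-positioned with colour $k$ is sent to a pair well-positioned with colour $k$. Consequently, any constraint at $s_{\theta'}(v)$ imposed by the edges of $T_1'$ and $T_2'$ is automatically satisfied once the corresponding constraint at $v$ is satisfied by $p'(v)$.

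To implement the plan, as in the non-symmetric proof I would assume $d_{T_1'}(v)=1$, $d_{T_2'}(v)=2$, $N_{T_1'}(v)=\{x\}$, $N_{T_2'}(v)=\{y,z\}$ and $e=yz\in E(T_2)$; by Lemma \ref{lem:W_fixedEandV} the degree-$3$ vertex $v$ is not fixed, so $v\not= s_{\theta'}(v)$. Starting from a $\C_s$-realisation $(p,\tau)$ of $\G$, I would run through the cases of Proposition \ref{prop:InductiveConstruction_Realisation1} to pick $p'(v)$, and whenever that proof translates a set $S\subseteq V(G)$ by a vertical vector $\epsilon(0,\pm 1)$, I would simultaneously translate its mirror $s_\theta(S)$ by the reflected vector $\epsilon(0,\mp 1)$. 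For generic $\epsilon$, the point $p'(v)$ can be kept off the reflection axis, which guarantees $p'(v)\not= p'(s_{\theta'}(v))$ and that the resulting placement is injective.

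The main obstacle I anticipate is that the parallel-edge $\sim$-classes of $y$ and $s_\theta(y)$ in $G-\{e,s_\theta(e)\}$ may overlap --- most awkwardly when $y\sim s_\theta(y)$ --- in which case the symmetric shifts would demand moving a single vertex up and down simultaneously. To bypass this I would adapt the contraction device from the final sub-case of Proposition \ref{prop:InductiveConstruction_Realisation1}: contract each offending symmetric chain of parallel edges to obtain a strictly smaller symmetric $2$-tree decomposition $\G^o\in \G_2^{sym}$, apply the inductive hypothesis to obtain a $\C_s$-realisation of $\G^o$, and then re-expand each contracted orbit so that in the resulting $\C_s$-realisation of $\G$ the classes of $y$ and $s_\theta(y)$ are disjoint. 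The symmetric shift argument then goes through, producing the desired $\C_s$-realisation of $\G'$.
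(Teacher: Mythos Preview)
Your overall plan---mirror the non-symmetric case analysis of Proposition \ref{prop:InductiveConstruction_Realisation1} and then reflect to place $s_{\theta'}(v)$---is exactly what the paper does. The substantive difference is your choice of reflection axis, and it is the source of the obstacle you spend the last paragraph trying to work around.

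The paper takes $\tau(s)$ to be reflection in the $y$-axis, not the $x$-axis. Since every perturbation in the proof of Proposition \ref{prop:InductiveConstruction_Realisation1} is a \emph{vertical} translation $\epsilon(0,\pm1)$, and vertical translations commute with reflection in the $y$-axis, there is no conflict: one simply replaces the equivalence $\sim$ by its symmetrised version
\[
u\sim^s w \iff u\in\{w,s_\theta(w)\}\text{ or }u\text{ is joined to }w\text{ or }s_\theta(w)\text{ by a chain of parallel edges in }G-\{e,s_\theta(e)\},
\]
so that each $\sim^s$-class is $s_\theta$-invariant and may be translated as a whole by a single vector $\epsilon(0,\pm1)$ while preserving the $\C_s$-symmetry. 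The case analysis then goes through verbatim, and the paper notes separately that the dual case $d_{T_1'}(v)=2$, $d_{T_2'}(v)=1$ (where the roles of the coordinates swap) needs similar but not identical methods.

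With your $x$-axis choice the vertical shifts anti-commute with $\tau(s)$, which is precisely what forces the opposing shifts on $S$ and $s_\theta(S)$ and produces the overlap problem. Your proposed contraction fix is more elaborate than necessary and, as stated, does not obviously handle the case where $y$ itself is the unique fixed vertex of Lemma \ref{lem:W_fixedEandV}: then $y=s_\theta(y)$ with nothing to contract, yet your scheme would demand moving $p(y)$ in two directions at once. Switching the axis dissolves the issue entirely.
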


\begin{proof}
 	Let $(p,\tau)$ be a $\C_s$-realisation for $\G$ in the plane.	We may assume, without loss of generality, that $\tau(s)$ is a reflection in the $y$-axis.
	
	Suppose the symmetric $2$-{tree} $1$-extension which forms $G'$ from $G$ adjoins the vertices $v$ and $s_{\theta'}(v)$. 
Then $d_{G'}(v)=3$, and $G$ is formed from $G'$ by deleting $v$, $s_{\theta'}(v)$, and all edges incident to either of these vertices, before adding an edge $e$ between the vertices in $N_{G'}(v)$ and another edge $s_{\theta'}(e)$ between the vertices in $N_{G'}(s_{\theta'}(v))$. 

Suppose $d_{T_1'}(v)=1$ and $d_{T_2'}(v)=2$. For a pair of vertices $u,w\in V(G)$, write $u\sim^s w$ if either $u\in\{w,s_\theta(w)\}$, or, $u$ is joined to either $w$ or $s_\theta(w)$ by a sequence of parallel edges in $G-\{e,s_\theta(e)\}$.	The construction of $p'$ now follows the proof of Proposition \ref{prop:InductiveConstruction_Realisation1} almost verbatim by replacing $\sim$ with $\sim^s$ and setting 
$p'(s_{\theta'}(v)) = \tau(s)(p'(v))$.
The case where  $d_{T_1'}(v)=2$ and $d_{T_2'}(v)=1$ can be proved by similar methods. 
\end{proof}

\begin{thm}
\label{thm:Symm_CsAxes_ConstructRealisation}
Let $\G=(G;T_1,T_2;\theta)\in\G_2^{sym}$ be a symmetric $2$-tree decomposition with $\G\not=\K_1$. Then $\G$ has a $\mathcal{C}_s$-realisation in the plane.
\end{thm}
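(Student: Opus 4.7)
The plan is to prove this theorem by strong induction on $|V(G)|$, following exactly the template of the proof of Theorem \ref{thm:RealisationIn2Dimensions}, but now using the symmetric inductive construction in Corollary \ref{cor:sym_inductive_W} and the two symmetric realisation propositions (Propositions \ref{prop:InductiveConstruction_Realisation0_Sym} and \ref{prop:InductiveConstruction_Realisation1_Sym}) as the step cases.

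First I would handle the base of the recursion, namely $\K_1$: its unique vertex $v_0$ is fixed by $s_\theta$, so setting $p(v_0)=(0,0)$ and taking $\tau(s)$ to be reflection in the $y$-axis gives a (vacuously well-positioned) $\mathcal{C}_s$-placement, and since $T_1$ and $T_2$ have no edges, this trivially constitutes a $\mathcal{C}_s$-realisation of $\K_1$. This provides the starting realisation against which the step cases will be applied.

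Next I would apply Corollary \ref{cor:sym_inductive_W} to obtain a sequence
\[
\K_1 = \G^{(1)} \rightarrow \G^{(2)} \rightarrow \cdots \rightarrow \G^{(n)}=\G,
\]
in which each $\G^{(i)}$ is obtained from $\G^{(i-1)}$ by a symmetric $2$-tree $j$-extension with $j\in\{0,1\}$, and show by strong induction on $i$ that every $\G^{(i)}$ admits a $\mathcal{C}_s$-realisation. For the step case, if $\G^{(i)}$ is obtained from $\G^{(i-1)}$ by a symmetric $2$-tree $0$-extension, then Proposition \ref{prop:InductiveConstruction_Realisation0_Sym} directly extends the realisation of $\G^{(i-1)}$ to one of $\G^{(i)}$. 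If instead the step is a symmetric $2$-tree $1$-extension, then Proposition \ref{prop:InductiveConstruction_Realisation1_Sym} applies, provided that every symmetric $2$-tree decomposition with fewer vertices than $\G^{(i-1)}$ also has a $\mathcal{C}_s$-realisation; this is precisely what the strong induction hypothesis delivers, since each $\G^{(k)}$ with $k<i$ has been realised and, more importantly, any \emph{other} smaller symmetric $2$-tree decomposition that might be invoked by the contraction step inside the proof of Proposition \ref{prop:InductiveConstruction_Realisation1_Sym} can itself be realised by a separate application of the same inductive argument, since it has strictly fewer vertices than $\G^{(i)}$.

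There is no real obstacle: all the technical geometric work has already been carried out in the preceding propositions, and Corollary \ref{cor:sym_inductive_W} guarantees that the construction scheme reaches every element of $\G_2^{sym}$. The only minor subtlety to verify is that the strong induction matches the exact form of the hypothesis required by Proposition \ref{prop:InductiveConstruction_Realisation1_Sym}, and this is immediate once the induction is stated on the vertex count rather than on the position in a single chosen construction sequence. Setting $\G^{(n)}=\G$ then yields the desired $\mathcal{C}_s$-realisation and completes the proof.
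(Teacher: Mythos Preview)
Your proposal is correct and follows essentially the same approach as the paper: the paper's proof simply instructs the reader to use Propositions \ref{prop:InductiveConstruction_Realisation0_Sym} and \ref{prop:InductiveConstruction_Realisation1_Sym} together with an argument analogous to that of Theorem \ref{thm:RealisationIn2Dimensions}, which is exactly what you do. Your explicit handling of the base case $\K_1$ and your care in phrasing the induction on the vertex count (so as to match the hypothesis of Proposition \ref{prop:InductiveConstruction_Realisation1_Sym}) make the argument slightly more detailed than the paper's sketch, but the route is the same.
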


\begin{proof}
{ Use Propositions \ref{prop:InductiveConstruction_Realisation0_Sym} and \ref{prop:InductiveConstruction_Realisation1_Sym}, and} apply a similar argument to the proof of Theorem \ref{thm:RealisationIn2Dimensions}.
\end{proof}

 Theorem \ref{thm:Symm_CsAxes_ConstructRealisation} shows that it is always possible to construct examples of isostatic $\C_s$-symmetric frameworks in the $\ell^\infty$ plane which induce prescribed symmetric monochrome spanning trees with no fixed edges. In the following, note that not all $(2,2)$-tight $\bZ_2$-symmetric graphs admit a symmetric $2$-tree decomposition.

\begin{cor}\label{cor:RealisationIn2Dimensions_Z2} 
Let $(G,\theta)$ be a $\bZ_2$-symmetric simple graph. 
If $(G,\theta)$ admits a symmetric $2$-tree decomposition $\G=(G;T_1,T_2;\theta)$, with no fixed edges, then there exists a $\C_s$-realisation for $\G$ in the plane which is well-positioned and minimally rigid in $(\mathbb{R}^2, \|\cdot \|_\infty)$.
\end{cor}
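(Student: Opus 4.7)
The plan is to observe that this corollary is essentially an immediate composition of the symmetric realisation result just proved (Theorem \ref{thm:Symm_CsAxes_ConstructRealisation}) with the characterisation of $\ell^\infty$ minimally rigid frameworks on simple graphs (Theorem \ref{thm:K_MinInfRigid_Simple}), in direct analogy with the proof of the non-symmetric Corollary \ref{cor:RealisationIn2Dimensions}. The hypotheses have been arranged precisely so that no extra combinatorial work is needed: symmetry of the decomposition guarantees the existence of a suitably symmetric geometric placement, and the simplicity of $G$ then activates the equivalence between the ``monochrome subgraphs are spanning trees'' condition and minimal rigidity.

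First I would note that $\G = (G; T_1, T_2; \theta) \in \G_2^{sym}$ by hypothesis, and $\G \neq \K_1$ since $G$ must have at least one edge (otherwise there is nothing to check and the statement is vacuous, or we can directly place the single vertex). Apply Theorem \ref{thm:Symm_CsAxes_ConstructRealisation} to obtain a $\C_s$-realisation $(p,\tau)$ of $\G$ in the plane. By the definition of a $\C_s$-realisation, $(G,p)$ is a realisation of the underlying $2$-tree decomposition $(G; T_1, T_2)$; in particular, $(G,p)$ is well-positioned and its induced monochrome subgraphs under the framework colouring $\kappa_p$ are precisely $T_1$ and $T_2$.

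Next, since $G$ is a simple graph, Theorem \ref{thm:K_MinInfRigid_Simple} applies directly to the well-positioned framework $(G,p)$: the fact that the monochrome subgraphs $T_1, T_2$ are spanning trees of $G$ is equivalent to $(G,p)$ being minimally rigid in $(\mathbb{R}^2, \|\cdot\|_\infty)$. Combining these two observations, $(p,\tau)$ is the desired $\C_s$-realisation.

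There is no real obstacle here; the only mild subtlety is to recognise that Theorem \ref{thm:K_MinInfRigid_Simple} is a purely local/combinatorial statement about a well-positioned framework on a simple graph and is therefore blind to whether the underlying placement also happens to respect $\tau$. In other words, the symmetry $\tau$ plays no role in the rigidity conclusion — it is merely inherited from the construction of $p$. This is also why the hypothesis ``no fixed edges'' appears only through its use in invoking Theorem \ref{thm:Symm_CsAxes_ConstructRealisation} (via the class $\G_2^{sym}$), and does not need to be re-examined in the rigidity step.
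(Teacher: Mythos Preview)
Your proof is correct and follows essentially the same approach as the paper: apply Theorem \ref{thm:Symm_CsAxes_ConstructRealisation} to obtain a $\C_s$-realisation, then invoke Theorem \ref{thm:K_MinInfRigid_Simple} to conclude minimal rigidity. The paper's proof is just these two sentences, and your additional remarks (about simplicity of $G$ activating Theorem \ref{thm:K_MinInfRigid_Simple}, and about $\tau$ playing no role in the rigidity step) are accurate elaborations rather than a different argument.
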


\proof
By Theorem \ref{thm:Symm_CsAxes_ConstructRealisation}, the symmetric $2$-tree decomposition $\G$ has a $\C_s$-realisation  in the plane.
By Theorem \ref{thm:K_MinInfRigid_Simple}, this realisation is  minimally rigid  in $(\mathbb{R}^2, \|\cdot \|_\infty)$.
\endproof

\section{Open problems}\label{sec:OpenProblems} 
{ In Section \ref{sec:2D_Geometry},} we showed that given any multi-graph with a partition of its edge set into two spanning trees, there exists a realisation of this $2$-tree decomposition in the plane. It is not known whether this result extends to $d$-dimensions.
Indeed, a solution here would settle a particular case of another open problem which is to determine whether every $(d,d)$-tight graph has a rigid placement in $(\mathbb{R}^d,\|\cdot\|_q)$ for $d\geq 3$ and $q\not=2$.

\begin{open}[Rigidity for $\ell^q$ norms.]
Let $G$ be a simple graph which is an edge-disjoint union of $d$ spanning trees $T_1,\ldots, T_d$, where $d\geq 3$. 

\begin{enumerate}[(a)] 
\item 
Does there exist a placement of $G$ in $\mathbb{R}^d$ such that the induced monochrome subgraphs of $G$ are precisely $T_1,\ldots,T_d$?

\item
Does there exist an isostatic placement of $G$ in $(\mathbb{R}^d, \|\cdot\|_{q})$ for all (or for some) $q\not=2$? 
\end{enumerate}

A positive answer to (a) would imply a positive answer to (b) in the case $q=\infty$. If $d=2$, then the answer to both questions is ``yes".
 \end{open}

The following example suggests a different approach is required to extend Corollary \ref{cor:RealisationIn2Dimensions} to higher dimensions.

\begin{example}
Suppose $\G=(G;T_1,T_2,T_3)$ is a $3$-tree decomposition with a realisation  $(G,p)$   in $\mathbb{R}^3$.
Suppose $x,y,z$ are vertices of $G$ with $p(x)=(0,0,0)$, $p(y)=(-1,3,-1)$ and $p(z)=(-1,10,-3)$.
Now suppose a $3$-tree $0$-extension is applied to $\G$ at the vertices $x,y,z$, which adds the vertex $v$ and results in a $3$-tree decomposition $\G'=(G';T_1',T_2',T_3')$ with 
$xv\in T_1$, $yv\in T_2$ and $zv\in T_3$. 
For these trees to correspond to the monochromatic trees induced by a framework colouring for $G'$, we must have
\begin{align*}
\|p(v)-p(x)\|_{\infty} &= |v_1-x_1| = |v_1|, \\
\|p(v)-p(y)\|_{\infty} &= |v_2-y_2|= |v_2-3|, \text{ and} \\
\|p(v)-p(z)\|_{\infty} &= |v_3-z_3|=|v_3+3|. 
\end{align*}
However, there is no such point $p(v)=(v_1,v_2,v_3)$ in $\mathbb{R}^3$. 
\end{example}

A related problem is that of constructing examples of redundantly rigid frameworks in $(\mathbb{R}^d, \|\cdot\|_{q})$.  Here a framework is {\em redundantly rigid} if it is rigid and every subframework obtained by the removal of a single edge is also rigid. Such frameworks have played a key role in the study of {\em global rigidity} for frameworks in Euclidean space (see for example \cite{jac-jor}).

\begin{open}[Redundant rigidity for $\ell^q$ norms.]
Let $G$ be a simple graph which is an edge-disjoint union of $d$ Hamilton cycles $H_1,\ldots,H_d$, where $d\geq2$. 

\begin{enumerate}[(a)] 
\item 
Does there exist a placement of $G$ in $\mathbb{R}^d$ such that the induced monochrome subgraphs of $G$ are precisely $H_1,\ldots,H_d$?

\item
Does there exist a redundantly rigid placement of $G$ in $(\mathbb{R}^d, \|\cdot\|_{q})$ for all (or for some) $q\not=2$? 
\end{enumerate}

A positive answer to (a) would imply a positive answer to (b) in the case $q=\infty$. 
\end{open}

Similar realisation problems arise for other norms. For example, it is shown in \cite{KL_MatrixNorms} that rigidity for the cylinder norm on $\bR^3$ is characterised by an induced framework colouring which decomposes the graph into an edge-disjoint union of a spanning tree and a spanning Laman graph. Again whether the existence of such a decomposition implies the existence of a geometric realisation is open.

\begin{open}[Rigidity for the cylinder norm.]
Let $G$ be a simple graph which is an edge-disjoint union of two spanning subgraphs $T$ and $L$ where $T$ is a tree and $L$ is a Laman graph. 

\begin{enumerate}[(a)] 
\item 
Does there exist a placement of $G$ in $(\mathbb{R}^3, \|\cdot\|_{cyl})$ such that the induced monochrome subgraphs of $G$ are precisely $T$ and $L$?

\item
Does there exist an isostatic placement of $G$ in $(\mathbb{R}^3, \|\cdot\|_{cyl})$? 
\end{enumerate}

A positive answer to (a) would imply a positive answer to (b). The smallest graph in this class is $K_6-e$, obtained by removing a single edge from the complete graph $K_6$, and this graph does admit an isostatic placement in $(\mathbb{R}^3, \|\cdot\|_{cyl})$ (see \cite{KL_MatrixNorms}).
\end{open}

Realisation problems of this type also arise in considering {\em forced symmetric rigidity} (see for example \cite{Jordan2016, Malestein2015, Schulze2015InfinitesimalRO} for the Euclidean context). A characterisation is obtained in \cite{KS_Reflection} for forced reflectional symmetry in the $\ell^\infty$ plane which is expressed in terms of framework colourings on the associated gain graphs. In this case, the gain graph is expressed as an edge-disjoint union of a spanning unbalanced map graph and a spanning tree. 

\begin{open}[Forced symmetric rigidity for the $\ell^\infty$ norm.]
Let $G_0$ be a gain graph for a $\bZ_2$-symmetric graph which is expressible as an edge-disjoint union of two spanning subgraphs $T$ and $M$, where $T$ is a tree and $M$ is an unbalanced map graph. 

Does there exist a placement of the covering graph $G$ in the plane, with reflectional symmetry, such that the induced monochrome subgraphs of $G_0$ are precisely $T$ and $M$?
\end{open}

\bibliographystyle{plain}
\bibliography{ConstructingIsostaticFrameworksReferences}

\end{document}